\definecolor{my_color}{rgb}{0,0.5,0.5}
\definecolor{MIXT}{rgb}{0.8,0.5,0.2}
\definecolor{mixt}{rgb}{0.5,0.3,0.2}
\definecolor{sin}{rgb}{0,0.5,0.5}
\definecolor{darkblue}{rgb}{0,0.1,0.8}
\definecolor{redi}{rgb}{0.5,0,0.4}
\numberwithin{equation}{section}
\newtheorem{thm}{Theorem}[section]
\newtheorem{lm}[thm]{Lemma}
\newtheorem{cl}[thm]{Corollary}
\newtheorem{prop}[thm]{Proposition}
\theoremstyle{remark}
\newtheorem{rmk}[thm]{Remark}
\theoremstyle{definition}
\newtheorem{ex}[thm]{Example} 
\newtheorem{df}[thm]{Definition}
\newcommand {\g}{{\mathfrak g}}
\newcommand {\q}{{\mathfrak q}}
\newcommand{\gt}{\mathfrak}
\newcommand {\eus}{\EuScript}
\newcommand {\gS}{{\eus S}}
\newcommand {\U}{{\eus U}}
\newcommand {\ap}{\alpha}
\newcommand {\C}{{\mathbb C}}
\newcommand {\BZ}{{\mathbb Z}}
\newcommand {\R}{{\mathbb R}}
\newcommand{\id}{{\mathsf{id}}}
\newcommand {\ad}{{\mathrm{ad}}}
\newcommand {\Ad}{{\mathrm{Ad\,}}}
\newcommand {\Ann}{{\mathrm{Ann}}}
\newcommand {\ind}{{\mathrm{ind\,}}}
\newcommand {\Lie}{{\mathsf{Lie\,}}}
\newcommand {\rk}{{\mathsf{rk\,}}}
\newcommand {\trdeg}{{\mathrm{tr.deg\,}}}
\newcommand {\diag}{{\mathrm{diag}}}
\newcommand{\Quot}{{\mathrm{Quot}\,}}
\newcommand{\SL}{\mathrm{SL}}
\newcommand{\GL}{\mathrm{GL}}
\newcommand{\wb}{\widehat{\gt b}}
\newcommand {\bp}{{\boldsymbol{p}}}
\newcommand {\fb}{\boldsymbol{f}}
\newcommand{\bwedge}{\text{\scalebox{1.3}{$\wedge$}}}
\newcommand{\ard}{\rightsquigarrow}
\newcommand {\cW}{{\mathcal W}}
\newcommand {\beq}{\begin{equation}}
\newcommand {\eeq}{\end{equation}}
\renewcommand{\le}{\leqslant}
\renewcommand{\ge}{\geqslant}
\newcommand {\bbk}{\Bbbk}
\begin{document}
\setlength{\parskip}{3pt plus 2pt minus 0pt}
\hfill { {\color{blue}\scriptsize  January 14, 2025} 
\vskip1ex

\title[On contact Lie algebras]
{Contact Lie algebras, generic stabilisers, and affine seaweeds}
\author[O.\,Yakimova]{Oksana S.~Yakimova}
\address[O.\,Yakimova]{Institut f\"ur Mathematik, Friedrich-Schiller-Universit\"at Jena, Jena, 07737, Deutschland}
\email{oksana.yakimova@uni-jena.de}
\thanks{This work is funded by the Deutsche Forschungsgemeinschaft (DFG, German Research Foundation) --- project number 404144169.}
\keywords{coadjoint action, conical orbit, generic stabiliser, seaweed, quasi-reductive}
\subjclass[2020]{17B05, 17B08, 17B45, 53D10}
\begin{abstract}
Let $\q=\Lie Q$ be an algebraic  Lie algebra of index $1$, i.e., a generic $Q$-orbit on $\q^*$ has codimension $1$. We show that the following conditions are equivalent: 
$\q$ is contact; a generic $Q$-orbit on $\q^*$ is not conical; there is a generic stabiliser for the coadjoint action of $\q$. 
In addition, if $\q$ is contact, then the subalgebra $\gS(\q)_{\sf si}\subset\gS(\q)$ generated by 
symmetric semi-invariants of $\q$ is a polynomial ring. 
We study also affine seaweed Lie algebras of type {\sf A} and find some contact as well as non-contact examples among them.  
\end{abstract}
\maketitle

\section*{Introduction}

\noindent
Let $\q$ be a 
   Lie algebra over a
 field $\bbk$ of characteristic zero. 
For $\alpha\in\gt q^*$, let $\textsl{d}_{\gt q}\alpha\in\bwedge^2\gt q^*$ be the image of $\alpha$ under the  
Chevalley--Eilenberg differential. Suppose that $\dim\gt q=2n+1$. 
Then $\gt q$ is said to be {\it contact}, if there is $\alpha\in\gt q^*$ such that 
$(\bwedge^n \textsl{d}_{\gt q}\alpha)\wedge\alpha\ne 0$. This definition originated from geometric constructions. 

A contact structure on a smooth manifold $M$ of dimension $2n+1$ is a differential 1-form $v$ such that 
$(dv)^n \wedge v \ne 0$ at each point of $M$, here $dv$ is the de Rham differential of $v$. 
For information about  contact geometry or topology, see e.g. \cite{gei}. 
According to Gromov \cite{gro}, there is a contact structure on every odd-dimensional connected non-compact real Lie group $Q$. 
In general, such contact structures are not invariant under left  
translations by the group elements. Furthermore, 
$Q$ admits an invariant contact form if and only if its Lie algebra $\Lie Q$ is contact, see e.g. \cite[Sect.\,2]{dia}.
 
Let $\gt q_\alpha\subset\gt q$ be the kernel of the skew-symmetric bilinear form 
 $\textsl{d}_{\gt q}\alpha$. Then $\gt q_\alpha$ is also the stabiliser of $\ap$ w.r.t. the coadjoint action. 
The {\it index of $\q$}  is defined by 
\begin{equation}\label{ind}
\ind\gt q=\min_{\gamma\in\gt q^*} \dim\gt q_\gamma.
\end{equation}  
 If $\alpha\in\q^*$ is a contact linear function, i.e., 
 $(\bwedge^n \textsl{d}_{\gt q}\alpha)\wedge\alpha\ne 0$, then $\dim\gt q_\alpha=1$.  
Thereby each contact Lie algebra is of index 1. However, not any $\gt q$ with $\ind\gt q=1$ is contact. Nevertheless, there are classes of Lie algebras, where these two properties are equivalent.
For instance, this is true  for algebraic Lie algebras whose  
radicals consist of $\ad$-nilpotent elements, see Corollary~\ref{no}.   This class includes all 
nilpotent Lie algebras, see also Example~\ref{nilp} for special feature of the nilpotent case. 

Until the end of the Introduction assume that $\bbk$ is algebraically closed and that 
$\gt q$ is an algebraic Lie algebra, i.e., $\gt q=\Lie Q$, where $Q$ is a connected affine algebraic group.  
In Section~\ref{sec1},  we observe first that  
$\gt q$ with $\ind\gt q=1$ is contact if and only if a generic $Q$-orbit in $\gt q^*$ is not conical or, equivalently, 
if the extended group $\widetilde{Q}=Q\times\bbk^{\!^\times}$ acts on $\gt q^*$ with an open orbit. 
This implies that 
a contact form is unique up to conjugation by elements of $Q$ and scalar multiplication. 

A linear form $\gamma\in\gt q^*$ is {\it stable} in the terminology of \cite{TYu} and its stabiliser $\gt q_\gamma\subset\gt q$ 
is a {\it generic stabiliser for the coadjoint action}, if there is a non-empty open subset $U\subset \gt q^*$ such that  
$\gt q_\gamma$ and $\gt q_\beta$ are conjugate by an element of $Q$ for each $\beta\in U$. 

Our main result, Theorem~\ref{equiv2}, states that 
a Lie algebra $\gt q$ of index 1 is contact if and only if  there is a generic stabiliser for the coadjoint action.
In particular, if $\alpha\in\gt q^*$ is contact, then it is stable.  

In Section~\ref{sec-products},
we deal with semi-direct products 
$Q=L\ltimes\exp(V)$, where $\exp(V)$ is a normal Abelian unipotent subgroup, $L$ acts on 
$V^*$ with an open orbit $L\gamma$, and $\ind\gt q=1$. 
We explain how to check whether $\gt q$ is contact or not. The answer is given in terms of the 
stabiliser $\gt l_\gamma=\Lie L_\gamma$ 
 and the normaliser $\gt l_{<\gamma>}$ of the line $\bbk\gamma$. 
For instance, if $\gt l_\gamma$ is not contact and $\ind\gt l_{<\gamma>}=0$, then $\gt q$ is contact,
see Theorem~\ref{impl}\,{\sf (i)}. 
If $\gt l_\gamma$ is contact and $\ind\gt l_{<\gamma>}=0$, then $\gt q$ may be contact or not. This depend on 
the eigenvalue on $\gamma$ of 
a special semisimple element $s\in\gt l_{<\gamma>}$, see Proposition~\ref{impl-ii}.

In Section~\ref{sec-inv}, we consider the subring  $\gS(\q)^\q$ of symmetric invariants and the ring $\gS(\q)_{\sf si}\subset\gS(\q)$ 
generated by 
semi-invariants of $\q$.
If $\q$ is contact, then  $\gS(\q)_{\sf si}$  is a finitely generated 
polynomial ring, 
see  Proposition~\ref{free}.  
Being contact is essential for our conclusion. There are non-contact Lie algebras of index 1 such that $\gS(\gt q)_{\sf si}$ is not a polynomial ring, see 
Example~\ref{not-free}. 
Our proof relies on an old result of Sato--Kimura \cite{SK}. Their method applies also to Lie algebras of index zero and leads to a similar conclusion, see \cite[Sect.\,3.2]{kot-T}. A different   approach to  $\gS(\q)_{\sf si}$ in case 
$\ind\q=0$ is developed in   \cite{Ooms}. 

Let $\q_{\rm tr}\subset\q$ be {\it the canonical truncation} of a Lie algebra $\gt q$ considered in \cite{bgr,fonya-et,kot-T}. 
If $\q$ is contact, then 
$\gS(\q_{\rm tr})^{\q_{\rm tr}}$ is a polynomial ring in $\trdeg \gS(\q)_{\sf si}=\ind\q_{\rm tr}$  variables; 
furthermore,  this property extends to  each finite-dimensional quotient 
$\gt q_{\rm tr}[t]/(t^k)$ of the current algebra $\gt q_{\rm tr}[t]$, see  Section~\ref{sec-tr}. Non-reductive Lie algebras $\gt s$ such that $\gS(\gt s)^{\gt s}$ is a polynomial ring with $\ind\gt s$ generators attract a lot of attention, see e.g.  
\cite{takiff,jos,p09,MZ,kot-T}.  A quest for this type of algebras continues. Our results provide another source of them. 

The ring $\gS(\q)^\q$ itself is less spectacular. 
If $\ind\q=1$, then either
$\gS(\q)^\q=\bbk$ or $\gS(\q)^\q$ is generated by one homogeneous polynomial,
see Section~\ref{gen}.  If $\ind\q=1$ and $\gS(\q)^\q\ne\bbk$, then $\q$ is contact by
Proposition~\ref{inv1}. 

Section~\ref{sec-aff} is devoted to {\it affine seaweeds}. These Lie algebras are analogues of the usual seaweeds in the setting of loop algebras, see Section~\ref{expl} for details on seaweeds in reductive Lie algebras. For one particular series of 
affine seaweeds in type {\sf A}$_r$, we derive an explicit formula for the index. 

Let 
$\gt p\subset\gt{sl}_{r+1}$ be a maximal parabolic with the diagonal blocks of sizes $a$ and $b$. Write 
$\gt p=\gt l\ltimes\gt n$, where $\gt n\cong\bbk^a{\otimes}(\bbk^b)^*$ is the nilpotent radical, which is Abelian. 
We add another copy of $\gt n$ obtaining $\gt q=\bar{\gt q}(a,b)=\gt l\ltimes (\gt n\oplus\gt n)$. Then $\q$ is an affine seaweed and 
$\ind\q=\gcd(2a,a+b)-1$ by Theorem~\ref{t-ind}. If $\gcd(2a,a+b)=2$, then $\ind\bar\q(a,b)=1$.
A natural question is whether this Lie algebra is contact or not. As it turns out, the question is rather difficult. 
If $a$ and $b$ are even and $\ind\bar\q(a,b)=1$, then $\bar\q(a,b)$ is quasi-reductive and contact,  see Theorem~\ref{sw1}. On the contrary, $\bar\q(1,b)$ is not contact for any odd $b$, see Example~\ref{notc}.  
There is an ample opportunity for further investigation of the coadjoint action of an affine seaweed. 

In \cite{coll},  it is shown that a seaweed subalgebra of $\gt{sl}_r(\C)$ or $\gt{sp}_{2r}(\C)$ that has index $1$ is contact.  
To be more precise, the main theorem of \cite{coll} states that 
an index-one seaweed of a complex simple Lie algebra is contact if and only if it is quasi-reductive.
By a result of Panyushev \cite{Dima03-b} each seaweed in $\gt{sl}_r$ or $\gt{sp}_{2r}$ is quasi-reductive.
 The equivalence of  Theorem~\ref{equiv2} explains, simplifies, and generalises  
arguments of \cite{coll}, see Section~\ref{expl}. 

The authors of \cite{coll} claim that their result provides a classification of contact seaweeds. Unfortunately,
this is very far from the truth. In spite of many formulas for the index of a seaweed \cite{derK,Dima,jos,mC},  
no one knows how to list all seaweeds of index 1 in  $\gt{sl}_r$ or in $\gt{sp}_{2r}$.  
Our results in Section~\ref{sec-products} indicate that a classification  of the contact Lie algebras is hardly possible.

\section{Preliminaries and notation} \label{sec-prem}

For an irreducible affine variety $Y$ over $\bbk$, we let $\bbk[Y]$ be the  ring of regular functions on $Y$ and 
$\bbk(Y)=\Quot \bbk[Y]$  the field of rational functions on $Y$.    
A statement that a certain assertion  holds for {\it generic points} of $Y$ (or for generic orbits on $Y$) means that this assertion holds  for all points of a non-empty open subset $U\subset Y$ (for all orbits intersecting $U$). If an algebraic group $Q$ acts on $Y$, then 
$\bbk[Y]^Q$  is the ring of $Q$-invariant regular functions   
and $\bbk(Y)^Q$ is the field of $Q$-invariant rational functions on $Y$. 

Suppose $\alpha\in\gt q^*$.
The kernel $\gt q_\alpha\subset\gt q$ of the skew-symmetric form $\textsl{d}_{\gt q}\alpha$ is 
defined by 
\[
\gt q_\alpha=\{\xi\in\gt q\mid \alpha([\xi,\gt q])=0\}.
\]
It is the stabiliser of $\alpha$ in $\gt q$. Let $Q_\alpha\subset Q$ be the stabiliser of $\alpha$ for the 
coadjoint action. Then 
$\gt q_\alpha = \Lie Q_\alpha$. Therefore $\dim Q\alpha=\dim\gt q-\dim\gt q_\alpha$. Hence $\ind\gt q$  is 
the minimal codimension of a $Q$-orbit in $\q^*$, see~\eqref{ind} for the definition of index.  If $Q$ is an algebraic group, then 
\begin{equation} \label{tr-ind}
\ind\gt q=\trdeg\bbk(\q^*)^Q
\end{equation} 
by the Rosenlicht theorem, see~\cite[IV.2]{spr}. 

Set $\gt q^*_{\sf reg}=\{\gamma\in\gt q^*\mid \dim\gt q_\gamma=\ind\gt q\}$ and 
$\gt q^*_{\sf sing}=\gt q^*\setminus \gt q^*_{\sf reg}$. 
We say that $\q$ has the {\sl codim}--$2$ property if $\dim \q^*_{\sf sing}\le \dim\q-2$. 

Let $V$ be a finite-dimensional vector space over $\bbk$ and $V^*$ the dual space. 
Then $(V^*)^*$ is canonically isomorphic to $V$. For a subspace $W\subset V$, 
let $\Ann(W)\subset V^*$ be the {\it annihilator} of $W$.  

Over an algebraically closed field, an orbit $Qy$ of a group $Q$ is said to be {\it conical}, if 
$\bbk^{\!^\times}\! y\subset Qy$. For the coadjoint representation 
\[
\ad^*\!: \q \to\gt{gl}(\q^*)
\]
and $\ap\in\gt q^*$, 
an equivalent condition is that $\ap\in\ad^*(\q){\cdot}\ap$. 

\begin{lm}[{cf. \cite[Lemma\,2.8]{MRS}}] \label{alg-con}
For any finite-dimensional Lie algebra $\gt q$ and any $\alpha\in\gt q^*$, there is an equivalence:\,  
$\ap\in\ad^*(\q){\cdot}\ap\ \Leftrightarrow \ \alpha(\q_\alpha)=0$. 
\end{lm}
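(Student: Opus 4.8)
The plan is to recast both sides of the equivalence in terms of a single linear map and then identify its image. I would introduce $\phi_\alpha\colon\q\to\q^*$, $\phi_\alpha(\xi)=\ad^*(\xi)\alpha$, so that the tangent space $\ad^*(\q){\cdot}\alpha$ to the coadjoint orbit at $\alpha$ is exactly $\Ima\phi_\alpha$. By definition of the coadjoint action, $\langle\phi_\alpha(\xi),\eta\rangle=-\alpha([\xi,\eta])$ for all $\eta\in\q$; up to sign this is the Kirillov form $B_\alpha(\xi,\eta)=\alpha([\xi,\eta])$, which is skew-symmetric with radical $\q_\alpha$. In particular $\Ker\phi_\alpha=\q_\alpha$, directly from the definition $\q_\alpha=\{\xi\mid\alpha([\xi,\q])=0\}$.

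The key step is the identity $\Ima\phi_\alpha=\Ann(\q_\alpha)$. One inclusion is a one-line check: for $\xi\in\q$ and $\eta\in\q_\alpha$ we get $\langle\phi_\alpha(\xi),\eta\rangle=-\alpha([\xi,\eta])=\alpha([\eta,\xi])=0$, since $\eta\in\q_\alpha$ annihilates $\alpha([\eta,\q])$; hence $\Ima\phi_\alpha\subseteq\Ann(\q_\alpha)$. The reverse inclusion follows from a dimension count: rank--nullity together with $\Ker\phi_\alpha=\q_\alpha$ gives $\dim\Ima\phi_\alpha=\dim\q-\dim\q_\alpha=\dim\Ann(\q_\alpha)$, so the inclusion is an equality.

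With this identity in hand the lemma is immediate: the condition $\alpha\in\ad^*(\q){\cdot}\alpha$ means $\alpha\in\Ima\phi_\alpha=\Ann(\q_\alpha)$, which is precisely $\alpha(\q_\alpha)=0$.

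I expect no genuine obstacle here; the statement is essentially the skew-symmetry of $B_\alpha$ repackaged through the annihilator. The only points needing care are the sign convention in $\ad^*$ (immaterial, since $\Ima\phi_\alpha$ is a subspace independent of that overall sign) and the passage from the inclusion $\Ima\phi_\alpha\subseteq\Ann(\q_\alpha)$ to equality, for which the rank--nullity count is cleanest. If one prefers to avoid the dimension argument, the two implications can be proved by hand: $\alpha=\ad^*(\xi)\alpha$ forces $\alpha(\eta)=-\alpha([\xi,\eta])=0$ for every $\eta\in\q_\alpha$, giving the forward direction, while the reverse still rests on the equality $\Ima\phi_\alpha=\Ann(\q_\alpha)$.
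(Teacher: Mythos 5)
Your proof is correct and follows essentially the same route as the paper: the forward implication is the same one-line computation with the skew-symmetry of $\alpha([\cdot,\cdot])$, and the reverse implication rests on the identity $\ad^*(\gt q){\cdot}\alpha=\Ann(\gt q_\alpha)$, which the paper also invokes (``it follows from dimension reasons'') and which you justify by the same rank--nullity count. Packaging both directions through the single map $\phi_\alpha$ is a minor presentational difference, not a different argument.
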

\begin{proof}
If $\alpha\in\ad^*(\gt q){\cdot}\alpha$, then $\alpha=\ad^*(\xi)(\alpha)$ 
for some $\xi\in\gt q$ and 
\[
\alpha(\gt q_\alpha)=\ad^*(\xi)(\alpha)(\gt q_\alpha)=-\alpha([\xi,\gt q_\alpha])=0.
\]
Suppose now that $\alpha(\gt q_\alpha)=0$. Then $\alpha\in\Ann(\gt q_\alpha)$. 
A standard fact is that $\Ann(\gt q_\alpha)=\ad^*(\gt q){\cdot}\alpha$, 
it follows from dimension reasons.  Hence $\alpha\in\ad^*(\gt q){\cdot}\alpha$. 
\end{proof}

We will use one standard characterisation of the contact Lie algebras. The statement is not difficult, but important, therefore an explanation is included. 
Suppose  $\dim\q=2n+1$, while $\ind\gt q=1$, and $\ap\in\q^*_{\sf reg}$. 
Then $\q^*$ has a basis  $\{\xi_1,\ldots,\xi_{2n+1}\}$ such that
$$
\textsl{d}_{\q}\ap=\xi_1\wedge\xi_2+\ldots+\xi_{2n-1}\wedge\xi_{2n}.
$$ 
Here 
$\gt q_\ap$ is equal to 
$\Ann(\langle \xi_1,\ldots,\xi_{2n}\rangle_{\bbk})\subset\q$. Note that  
$(\bwedge^n \textsl{d}_{\gt q}\alpha)\wedge\alpha = 0$ if and only if $\alpha \in\langle \xi_1,\ldots,\xi_{2n}\rangle_{\bbk}$. 
The point $\ap$ is contained in the subspace $\langle \xi_1,\ldots,\xi_{2n}\rangle_{\bbk}$ if and only if 
$\ap\in\Ann(\q_\ap)$, i.e., if 
$\ap(\q_\ap)=0$.  In other words, 
\begin{equation}\label{def0}
\text{$\ap\in\q^*_{\sf reg}$ is a contact form if and only if $\ap(\q_\ap)\ne 0$.}
\end{equation}   
Hence, for a Lie algebra $\q$ of index 1, we have  
\begin{equation}\label{def}
\gt q \text{ is contact} \ \Leftrightarrow \ \alpha(\q_\alpha)\ne 0 \text{ for a generic point } \alpha\in\gt q^*_{\sf reg}. 
\end{equation}

In most of the paper, we assume that $\bbk=\overline{\bbk}$. This is a natural assumption, since the property of being contact does not
change under field extensions. 

Whenever dealing with classical Lie algebras, we assume that $E_{ij}$ are elementary matrices (matrix units). 

If $V$ is a vector space and $k$ a natural number, then $kV$ is a direct sum of $k$ copies of $V$.

\section{Lie algebras of index 1}\label{sec1}

In this section, we obtain new characterisations of the contact Lie algebras. 

\begin{prop}\label{equiv1}
Suppose that $\ind\q=1$. Then $\q$
is contact if and only if $\ap\not\in\ad^*(\gt q){\cdot}\alpha$ for a generic point $\ap\in\gt q^*$. 
For an algebraic $\q$ over an algebraically closed field, this condition means that 
a generic $Q$-orbit in $\gt q^*$ is not conical. 
\end{prop}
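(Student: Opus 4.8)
The plan is to read off the first equivalence by assembling the characterisation~\eqref{def} of contactness with Lemma~\ref{alg-con}, and then to reinterpret the resulting condition geometrically via the description of conical orbits recalled before Lemma~\ref{alg-con}.

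First I would note that the hypothesis $\ind\q=1$ already guarantees that $\dim\q$ is odd, so that the notion of a contact form makes sense: for generic $\gamma$ one has $\dim\q-\ind\q=\dim Q\gamma=\rk(\textsl{d}_\q\gamma)$, and the rank of the skew-symmetric form $\textsl{d}_\q\gamma$ is even, whence $\dim\q$ and $\ind\q$ share parity and $\dim\q=2n+1$. Since $\q^*_{\sf reg}$ is a nonempty open (hence generic) subset of $\q^*$, a property holding for a generic point of $\q^*_{\sf reg}$ holds for a generic point of $\q^*$ and conversely. Now \eqref{def} states that $\q$ is contact if and only if $\ap(\q_\ap)\ne 0$ for a generic $\ap\in\q^*_{\sf reg}$, while Lemma~\ref{alg-con} gives $\ap(\q_\ap)=0\Leftrightarrow\ap\in\ad^*(\q){\cdot}\ap$. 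Negating the latter, $\ap(\q_\ap)\ne 0\Leftrightarrow\ap\notin\ad^*(\q){\cdot}\ap$, so $\q$ is contact precisely when $\ap\notin\ad^*(\q){\cdot}\ap$ for a generic $\ap\in\q^*$. This settles the first assertion over an arbitrary field.

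For the second assertion, assume $\q=\Lie Q$ is algebraic and $\bbk=\overline{\bbk}$. As recalled in Section~\ref{sec-prem}, an orbit $Q\ap$ is conical if and only if $\ap\in\ad^*(\q){\cdot}\ap$; equivalently, the radial direction $\ap$ is tangent to $Q\ap$ at $\ap$, since $\ad^*(\q){\cdot}\ap=T_\ap(Q\ap)$. Hence the condition $\ap\notin\ad^*(\q){\cdot}\ap$ is exactly the statement that $Q\ap$ is not conical. Set $N=\{\ap\in\q^*\mid\ap\notin\ad^*(\q){\cdot}\ap\}$; because conicality is an intrinsic property of an orbit, $N$ is $Q$-stable (indeed $\widetilde Q$-stable).

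It remains only to match the two flavours of genericity, and this is where I would be slightly careful, although no real difficulty arises. The phrase ``$\ap\notin\ad^*(\q){\cdot}\ap$ for a generic point'' means that $N$ contains a nonempty open subset $U$; since $N$ is a union of $Q$-orbits, every orbit meeting $U$ lies in $N$, i.e.\ is non-conical, which is precisely the statement that a generic $Q$-orbit is not conical. Conversely, if a generic orbit is non-conical, the orbits meeting some nonempty open $U$ avoid $\ad^*(\q){\cdot}\ap$, so $U\subset N$ and a generic point of $\q^*$ satisfies $\ap\notin\ad^*(\q){\cdot}\ap$. The only point requiring attention is thus the $Q$-stability of $N$, which lets one pass freely between generic points and generic orbits; the entire substance of the proposition is carried by \eqref{def}, Lemma~\ref{alg-con}, and the conicality criterion.
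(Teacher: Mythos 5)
Your argument is correct and follows essentially the same route as the paper: the first equivalence is obtained by combining the characterisation~\eqref{def} with Lemma~\ref{alg-con}, and the second by invoking the criterion that $Q\ap$ is conical if and only if $\ap\in\ad^*(\q){\cdot}\ap$, as recalled in Section~\ref{sec-prem}. The extra remarks on parity, the openness of $\q^*_{\sf reg}$, and the $Q$-stability of the non-conical locus are sound bookkeeping that the paper leaves implicit.
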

\begin{proof}
By Lemma~\ref{alg-con}, $\ap$ is contained in $\ad^*(\gt q){\cdot}\alpha$ if and only if $\alpha(\q_\alpha)=0$. 
The desired equivalence follows now from~\eqref{def}. 

Note that in case $\bbk=\overline{\bbk}$, 
an orbit $Q\alpha\subset \gt q^*$ contains $\bbk^{\!^\times}\!\ap$ if and only if 
$\bbk\alpha\subset\ad^*(\gt q){\cdot}\alpha$. 
\end{proof}

Unless otherwise stated, we assume from now on that $\bbk=\overline{\bbk}$, $Q$ is a connected affine algebraic group, and $\gt q=\Lie Q$. 

\begin{thm}\label{equiv2}
A Lie algebra $\gt q$ of index 1 is contact if and only if 
there is a generic stabiliser for the coadjoint action. 
\end{thm}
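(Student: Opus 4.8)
The plan is to prove the two implications separately, using Proposition~\ref{equiv1} as the bridge that reformulates ``contact'' as ``a generic orbit is not conical.''

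\textbf{The direction contact $\Rightarrow$ generic stabiliser.} First I would observe that, by the characterisation~\eqref{def0}, a contact form $\ap$ is precisely a regular point with $\ap(\q_\ap)\ne 0$. By Proposition~\ref{equiv1} and Lemma~\ref{alg-con}, the set of such $\ap$ is open and dense in $\q^*$. The key idea is to exploit the extra torus action: since a generic orbit is not conical, the extended group $\widetilde Q=Q\times\bbk^\times$ acts on $\q^*$ with a \emph{dense} orbit (as noted in the Introduction). On this dense orbit the scaling action of $\bbk^\times$ is transverse to the $Q$-orbits, so a generic $Q$-orbit has codimension exactly $1$ (as $\ind\q=1$ forces) and these orbits sweep out the dense $\widetilde Q$-orbit. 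I would then argue that on the dense $\widetilde Q$-orbit all the stabilisers $\q_\beta$ are conjugate under $Q$: given two regular points $\beta,\beta'$ in the dense $\widetilde Q$-orbit, there is $(g,t)\in\widetilde Q$ with $t\,g{\cdot}\beta=\beta'$; since $\q_\beta=\q_{t\beta}$ (the stabiliser is unchanged by scaling) and $\q_{g\cdot\beta}=\Ad(g)\q_\beta$, we get $\q_{\beta'}=\Ad(g)\q_\beta$. Hence $\q_\ap$ is a generic stabiliser and $\ap$ is stable.

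\textbf{The direction generic stabiliser $\Rightarrow$ contact.} Here I would argue by contradiction using Proposition~\ref{equiv1}: if $\q$ is \emph{not} contact while $\ind\q=1$, then a generic orbit $Q\beta$ \emph{is} conical, i.e.\ $\bbk^\times\beta\subseteq Q\beta$, equivalently $\beta(\q_\beta)=0$ by Lemma~\ref{alg-con}. The goal is to show no generic stabiliser can exist. Fix a regular $\beta$ and suppose for contradiction that $\q_\beta$ is a generic stabiliser. Because the orbit is conical, the whole line $\bbk^\times\beta$ lies in $Q\beta$, so the scaling $t\mapsto t\beta$ can be absorbed into the $Q$-action; this means the torus $\bbk^\times$ does \emph{not} enlarge a generic orbit, and $\widetilde Q$ has no dense orbit. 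The plan is to derive a contradiction by a dimension/transversality count: I would look at the map $\q^*_{\sf reg}\to$ (the variety of $\dim\q_\beta$-dimensional subalgebras), sending $\gamma\mapsto\q_\gamma$, and show that if this map were generically constant (up to $Q$-conjugacy), then combined with conicity one forces $\ap\in\Ann(\q_\ap)$ on a dense set, contradicting the stability hypothesis which (via the argument in the first direction run in reverse) would have to produce a non-conical orbit.

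\textbf{Main obstacle.} The delicate point is the converse direction: turning ``generic stabiliser exists'' into ``generic orbit not conical'' without circularity. The cleanest route is probably to prove instead the contrapositive directly, namely \emph{conical generic orbit $\Rightarrow$ no generic stabiliser}, and here the obstacle is controlling how the stabiliser $\q_\gamma$ varies as $\gamma$ ranges over a generic line. I expect the decisive technical step to be showing that conicity forces the stabilisers along different generic $Q$-orbits to be \emph{non}-conjugate, which likely requires analysing the $\bbk^\times$-weight structure of $\q^*$ relative to the codimension-one orbits and invoking the codim--$2$ or Rosenlicht-type genericity (via~\eqref{tr-ind}) to guarantee enough orbits are in play. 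Establishing that non-conjugacy rigorously, rather than heuristically, is where the real work lies.
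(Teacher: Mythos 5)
Your first direction (contact $\Rightarrow$ generic stabiliser) is correct and is essentially the paper's own argument: the non-conical regular orbit $Q\alpha$ sweeps out the dense open set $\bbk^{\!^\times}Q\alpha$ under scaling, and since $\q_{t\beta}=\q_\beta$ and $\q_{g\cdot\beta}=\Ad(g)\q_\beta$, all stabilisers over this set are $Q$-conjugate to $\q_\alpha$.

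The converse direction, however, contains a genuine gap, and you have in effect admitted as much. What you offer is a plan to show that conicity forces the stabilisers along different generic orbits to be non-conjugate, plus an appeal to ``the argument in the first direction run in reverse''; the latter is circular (it presupposes that a stable point has a non-conical orbit, which is exactly the implication to be proved), and the former is never established and is not how the paper proceeds. The paper's argument rests on two ingredients absent from your proposal. First, the Tauvel--Yu criterion \cite[Corollaire~1.8]{TYu}: if $\q_\alpha=\bbk y$ is a generic stabiliser, then $\q_\alpha\cap[\q,\q_\alpha]=0$. Second, a direct computation showing that if $\q$ is \emph{not} contact then $y\in[\q,y]$: one identifies $\Ann([\q,y])=\{\beta\in\q^*\mid y\in\q_\beta\}$, notes that it meets $\q^*_{\sf reg}$ in a dense open subset $R$ (because $\alpha$ itself lies in it), and observes that for $\gamma\in R$ one has $\q_\gamma=\bbk y$ and $\gamma(\q_\gamma)=0$ by~\eqref{def0}, whence $\gamma(y)=0$ on all of $\Ann([\q,y])$ and therefore $y\in\Ann(\Ann([\q,y]))=[\q,y]$. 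These two facts contradict each other. Without the Tauvel--Yu input (or some equivalent structural constraint on what a generic stabiliser must satisfy), your proposed dimension and transversality count has nothing to bite on: conicity alone does not prevent the stabilisers on a dense set from all being conjugate, so the non-conjugacy you hope to extract is not forthcoming by that route.
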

\begin{proof}
Suppose first that $\gt q$ is contact. Choose $\alpha\in\gt q^*_{\sf reg}$ such that 
$\alpha(\gt q_\alpha)\ne 0$. Then the orbit $Q\alpha$ is not conical by Lemma~\ref{alg-con} and Proposition~\ref{equiv1}. 
Thereby 
\begin{equation}\label{Y}
Y=\bbk^{\!^\times}\! Q\alpha =\{ c \Ad\!(g)\alpha \mid c\in \bbk^{\!^\times}\!, g\in Q\}
\end{equation}
is a dense open subset of $\gt q^*$. Clearly $\q_\alpha$ and 
$\q_\gamma$ are conjugate  by an element of $Q$ for each $\gamma\in Y$. Thus, $\gt q_\alpha$ is a generic stabiliser for 
the coadjoint action. 

Suppose now that $\q$ is not contact, but $\gt q_\alpha$ with $\alpha\in\gt q^*_{\sf reg}$ is a generic stabiliser for 
the coadjoint action.  Let $y\in\gt q_\alpha$ be a non-zero vector. 
We show that $[\gt q,y]$ contains $y$. 

It suffices to prove that 
$\gamma(y)=0$ for every $\gamma\in\Ann([\gt q,y])$. 
Note that 
$$
\Ann([\gt q,y])=\{\beta\in\gt q^* \mid y\in\gt q_\beta\}.
$$ 
In particular, $\alpha\in \Ann([\gt q,y])$. Also
$\alpha$ is a regular point of $\gt q^*$, thereby $R=\gt q^*_{\sf reg}\cap  \Ann([\gt q,y])$ is a dense open subset 
of $\Ann([\gt q,y])$. For each point $\gamma\in R$, we have
$\gamma(\gt q_\gamma)=0$ by~\eqref{def0}. Since here $y\in\gt q_\gamma$ and $\dim\gt q_\gamma=1$, 
the equality $\gamma(y)=0$ holds. Clearly, it extends from $R$ to all points of $\Ann([\gt q,y])$. 

By a criterion \cite[Corollaire 1.8]{TYu} of Tauvel and Yu, $\gt q_\alpha\cap [\gt q,\gt q_\alpha]=0$, because   
$\gt q_\alpha=\bbk y$ is a generic stabiliser. This provides a contradiction, since 
$y\in\gt q_\alpha\cap [\gt q,\gt q_\alpha]$. 
\end{proof}

\subsection{Seaweeds and quasi-reductive Lie algebras}  \label{expl}
Let $\g=\Lie G$ be a simple Lie algebra and $\gt p_1,\gt p_2\subset\gt g$ two parabolic subalgebras such that 
$\gt p_1+\gt p_2=\g$. Then 
$\gt q=\gt p_1\cap\gt p_2$ is a Lie algebra of {\it seaweed type} or just a seaweed, also called a {\it bi-parabolic}.    
Each seaweed $\q$ is an algebraic Lie algebra and $\q=\Lie Q$, where $Q\subset G$ is the intersection of two 
parabolic subgroups. 
In \cite{Dima03-b}, Panyushev conjectured that if $\gt q_\gamma$ with $\gamma\in\q^*$
is a generic stabiliser for the coadjoint action of a seaweed $\q\subset\g$, then $Q_\gamma$ is reductive. 

If there is a stable point $\ap\in\q^*$, then $\q$ is called {\it stable} as well. This terminology is not standard, but it is used, for example, in \cite{coll}. 

Following \cite{duflo}, we say that $\q$ is  {\it quasi-reductive} if there is $\beta\in\q^*$ such that  the quotient 
$Q_\beta/Z$ by the centre $Z\subset Q$
 is a reductive subgroup of $\GL(\q^*)$. For more information on these Lie algebras see e.g. \cite{duflo,MRS}. 
 
\begin{rmk} \label{rem-qred} 
Suppose $Q_\beta/Z$ is a reductive subgroup of $\GL(\q^*)$. Then there is a generic stabiliser 
for the action of $Q/Z$ on $\q^*$, which is equal to a generic stabiliser for the action of $Q_\beta/Z$ on $\gt q_\beta^*$,
see e.g. \cite[Lemma~2.3]{MRS}.   Thus, a quasi-reductive  $\q$ is stable and its generic stabiliser 
 is a sum $ \Lie Z \oplus \Lie (\bbk^{\!^\times})^\ell $, where $\ell$ is the rank of $Q_\beta/Z$. 
\end{rmk}

Since the centre of a seaweed $\q\subset\g$ consists of semisimple elements, we may reformulate 
Panyushev's conjecture as follows: $\q$ is stable if and only if it is quasi-reductive.  The conjecture is proven by Ammari \cite{amm} 
via a case-by-case analysis. 
Combining this with Theorem~\ref{equiv2}, we derive the equivalence: a seaweed of index 1 is contact if and only if it is
quasi-reductive, which is the main result of \cite{coll}.  

The argument in \cite{coll} goes as follows:
\[
\text{stable} \ \overset{\text{Ammari}}{\Longrightarrow} \ \text{quasi-reductive}  \ \Longrightarrow \ \text{contact}  \ \Longrightarrow \  \text{stable}.
\]
The first paragraph of the proof of Theorem~\ref{equiv2} shows that the last implication is quite straightforward. 
The implications  
\begin{equation} \label{qred}
\text{quasi-reductive}  \  \overset{\text{\cite[Lemma~2.3]{MRS}}}{\Longrightarrow} \ \text{stable} \ 
\overset{\text{Theorem\,\ref{equiv2}}}{\Longrightarrow} \ \text{contact}
\end{equation}
 are true for all 
Lie algebras of index 1. 
It is possible to see directly, why a quasi-reductive $\q$ with $\ind\q=1$ has to be contact. 
An ingredient here is the fact that for a generic point $\ap\in\q^*$, its stabiliser $\q_\ap$ consists of $\ad$-nilpotent elements, if 
$\q$ is of index 1, but not contact. 

A contact Lie algebra does not have to be quiasi-reductive.
There are many  examples such that  $Q_\ap$ is unipotent and not central   for any contact form $\ap\in\q^*$. Below is one of them. 

\begin{ex} \label{dirpr}
Let $Q=\SL_2\ltimes\exp(\bbk^2)$ be a semi-direct product of $\SL_2$ and a two-dimensional Abelian unipotent group. 
Then $\ind\q=1$ and $Z=\{e\}$. Let $\{x,y\}$ be a basis of the nilpotent radical of $\q$ such that 
$[E_{12},x]=0$ and $[E_{12},y]=x$. Take $\ap\in\q^*$ such that $\ap(y)=1=\ap(E_{12})$ and  
$\ap(x)=\ap(E_{21})=\ap(E_{11}-E_{22})=0$. Then $\q_\ap$ is spanned by 
$u=E_{12}+2y$. Since $\ap(u)=3\ne 0$, the orbit $Q\ap$ is not conical and $\bbk u$ is a generic stabiliser for 
the coadjoint action. 
\end{ex}

We end this part of the paper by considering nilpotent Lie algebras.

\begin{ex}\label{nilp}
Let $\gt n=\Lie N$ be a nilpotent Lie algebra of index $1$.
The centre $\gt z$ of $\gt n$ is non-zero and $\gt z\subset\gt n_\ap$ for any $\ap\in\gt n^*$. 
Thus, $\gt n_\ap=\gt z$ for any $\ap\in\gt n^*_{\sf reg}$. Then $\gt z$ is the generic stabiliser for the coadjoint action of $\gt n$.
Clearly $\ap(\gt z)\ne 0$ for a generic $\ap$ and $\gt n$ is contact. 
Since $N_\ap$ is connected, it is equal to the centre $Z\subset N$. Hence $N_\ap/Z=\{e\}$ and $\gt n$ is quasi-reductive. 
 For instance, $\gt n$ may be a $3$-dimensional Heisenberg 
Lie algebra with a basis $\{x,y,z\}$ such that $z$ is central and $[x,y]=z$. 
\end{ex}

\subsection{Contact semi-direct products} \label{sec-products}
We consider a semi-direct product $Q=L\ltimes\exp(V)$, where  $\exp(V)$ is a normal Abelian unipotent subgroup.  
Let $\gamma\in V^*$ be a generic point and $L_\gamma\subset L$ its  stabiliser.   Set $\gt l_\gamma=\Lie L_\gamma$. By 
a formula of Ra{\"i}s \cite{r}, 
\begin{equation} \label{semi-ind}
\ind\gt q=\ind \gt l_\gamma+ \dim V-\dim L\gamma
\end{equation}
Suppose that $\ind\q=1$. Then there are two possibilities. Either 
\begin{itemize}
\item[{\tt (A)}:] $\dim L\gamma=\dim V-1$ and $\ind\gt l_\gamma=0$ \  or  
\item[{\tt (B)}:] $\dim L\gamma=\dim V$ and $\ind\gt l_\gamma=1$.
\end{itemize}
We want to check, whether $\q$ is contact or not. Regard $\gamma$ as a function on 
$\q$ such that $\gamma(\gt l)=0$ for $\gt l=\Lie L$. Also we identify $\gt l^*$  with $\Ann(V)\subset\q^*$. 

\begin{lm}\label{A}
In case {\tt (A)},   $\q$ is contact if and only if a generic $L$-orbit on $V^*$ is not 
conical. 
\end{lm}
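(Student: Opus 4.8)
The plan is to reduce the contact property, via Proposition~\ref{equiv1} and the criterion~\eqref{def0}, to an explicit description of the generic coadjoint stabiliser $\gt q_\alpha$, and then to match the resulting condition with the conicity of a generic $L$-orbit on $V^*$. Throughout I write a generic $\alpha\in\gt q^*$ as $\alpha=\mu+\gamma$ with $\mu\in\gt l^*=\Ann(V)$ and $\gamma\in V^*=\Ann(\gt l)$; for generic $\alpha$ the component $\gamma$ is a generic point of $V^*$, so I may assume $\dim L\gamma=\dim V-1$ as in case {\tt (A)}. Denoting the action of $\gt l$ on $V$ by $\eta{\cdot}v$, the bracket on $\gt q=\gt l\ltimes V$ reads $[\eta_1+v_1,\eta_2+v_2]=[\eta_1,\eta_2]+(\eta_1{\cdot}v_2-\eta_2{\cdot}v_1)$, the last summand lying in $V$ since $V$ is abelian. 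I set $W:=\ad^*(\gt l){\cdot}\gamma\subset V^*$, which is the tangent space $T_\gamma(L\gamma)$, so that $\dim W=\dim V-1$.

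First I would compute $\gt q_\alpha$ by splitting the defining condition $\alpha([\xi+v,\gt q])=0$ into its $V$- and $\gt l$-parts. Testing against $w\in V$ gives $\gamma(\xi{\cdot}w)=0$ for all $w$, i.e.\ $\ad^*(\xi)\gamma=0$, so $\xi\in\gt l_\gamma$; testing against $\eta\in\gt l$ yields a linear relation tying $\mu$, $\xi$ and $v$. In particular the kernel of the projection $\gt q_\alpha\to\gt l_\gamma$ is exactly $\gt q_\alpha\cap V=\Ann_V(W)$, which is one-dimensional precisely because $\dim W=\dim V-1$. Since $\ind\gt q=1$, the generic stabiliser is one-dimensional, so the projection to $\gt l_\gamma$ must vanish; hence $\gt q_\alpha=\bbk v_0$, where $v_0$ spans $\Ann_V(W)$ and depends on $\gamma$ alone (not on $\mu$).

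With this in hand, the criterion~\eqref{def0} says that $\gt q$ is contact iff $\alpha(v_0)\ne 0$ for generic $\alpha$, and since $v_0\in V$ this reads $\gamma(v_0)\ne 0$. As $v_0$ spans $\Ann_V(W)$, the double-annihilator identity gives $W=\Ann_{V^*}(v_0)$, the hyperplane of functionals killing $v_0$; thus $\gamma(v_0)=0$ is equivalent to $\gamma\in W$, i.e.\ to $\gamma\in\ad^*(\gt l){\cdot}\gamma$. Finally I would invoke the linear-action analogue of the remark closing the proof of Proposition~\ref{equiv1}: over $\bbk=\overline{\bbk}$ the orbit $L\gamma$ contains $\bbk^{\!^\times}\gamma$ iff $\bbk\gamma\subset\ad^*(\gt l){\cdot}\gamma$, equivalently iff $\gamma\in W$ (equivalently still, the line normaliser $\gt l_{<\gamma>}$ strictly contains $\gt l_\gamma$). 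Combining the two equivalences, $\gt q$ is contact iff $\gamma\notin W$ generically, iff a generic $L$-orbit on $V^*$ is not conical.

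The main obstacle is the bookkeeping in the stabiliser computation together with the compatibility of the genericity conditions: I must produce a single dense open subset of $\gt q^*$ on which simultaneously $\dim L\gamma=\dim V-1$ (so that $\Ann_V(W)$ is a line and $v_0$ is well defined and depends regularly on $\gamma$), $\dim\gt q_\alpha=\ind\gt q=1$ (so that $\gt q_\alpha=\bbk v_0$ with vanishing $\gt l_\gamma$-component), and $\gamma$ is generic in $V^*$. The conicity equivalence itself is routine over an algebraically closed field, and it is exactly the abelianness and normality of $V$ that make the bracket, and hence the splitting of the stabiliser condition, this transparent.
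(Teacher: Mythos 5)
Your proof is correct, and it takes a genuinely different route from the paper's. The paper never computes the coadjoint stabiliser: it uses the case-{\tt (A)} hypothesis $\ind\gt l_\gamma=0$ to pick $\ap\in\gt l^*$ whose restriction to $\gt l_\gamma$ lies in the open $L_\gamma$-orbit, observes that $\ad^*(V){\cdot}\gamma=\Ann(\gt l_\gamma)\subset\gt l^*$ by dimension count, concludes that $Q\beta$ for $\beta=\ap+\gamma$ is dense open in $\gt l^*\times L\gamma$, and reads off that $Q\beta$ is conical iff $L\gamma$ is, finishing with Proposition~\ref{equiv1}. You instead compute $\gt q_\alpha$ explicitly: the splitting of $\alpha([\xi+v,\cdot])=0$ into $V$- and $\gt l$-components, the observation that $\Ann_V(W)\subset\gt q_\alpha$ is a line for $W=\ad^*(\gt l){\cdot}\gamma$ of codimension one, and the regularity of $\alpha$ forcing $\gt q_\alpha=\bbk v_0\subset V$, after which the pointwise criterion~\eqref{def0} and the double-annihilator identity give $\alpha(\gt q_\alpha)\ne 0\Leftrightarrow\gamma\notin W\Leftrightarrow L\gamma$ not conical. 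The two arguments are dual faces of Lemma~\ref{alg-con} ($\ap\in\ad^*(\q){\cdot}\ap\Leftrightarrow\ap(\q_\ap)=0$): the paper works with the image $\ad^*(\q){\cdot}\beta$ via the orbit geometry, you work with the kernel $\gt q_\alpha$. Your version buys an explicit description of the generic stabiliser, $\gt q_\alpha=\Ann_V(\ad^*(\gt l){\cdot}\gamma)$, which is of independent interest and in the contact case identifies the generic stabiliser for the coadjoint action (Theorem~\ref{equiv2}); the paper's version is shorter and makes the role of $\ind\gt l_\gamma=0$ transparent. Your worries about compatibility of the genericity conditions are unfounded in a harmless way: all three conditions are open and non-empty on the irreducible variety $\gt q^*$, so their intersection is dense open, exactly as you suspected.
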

\begin{proof}
By dimension reasons,   $\ad^*(V){\cdot}\gamma=\Ann(\gt l_\gamma)\subset\gt l^*$. 
Let $\ap\in\gt l^*$ be such that $L_\gamma \bar\ap$ is open in $\gt l_\gamma^*$ for 
the restriction $\bar\ap=\ap|_{\gt l_\gamma}$. This is an open condition on $\ap$. Set $\beta=\ap+\gamma\in\gt q^*$. 
Then $Q\beta$ is a dense open subset of 
$\gt l^*\times L\gamma$.  Hence $Q\beta$ is conical if and only if $L\gamma$ is conical. 
Now the result follows from Proposition~\ref{equiv1}. 
\end{proof}

In case {\tt (A)}, 
both instances, $L\gamma$ is conical or is not,  take places. 
The following example illustrates this. 
 
\begin{ex}\label{ex-k}
Consider $\gt q=\bbk\ltimes V$, where $[V,V]=0$, $\dim V=2$, and $\bbk$ acts on $V$ with two non-zero characters
$\lambda$ and $\mu$. Then $\gt q$ is contact if and only if 
$\lambda\ne \mu$.
\end{ex} 
 
The situation  {\tt (B)} is much more interesting. Suppose that  it takes place. 
Let $L_{<\gamma>}\subset L$ be the normaliser of the line $\bbk\gamma$ and set $\gt l_{<\gamma>}=\Lie L_{<\gamma>}$. 

\begin{thm} \label{impl} Let $Q=L\ltimes\exp(V)$ be such that $\ind\q=1$ and 
$L\gamma\subset V^*$ is open. \\[.2ex]
{\sf (i)} Suppose that $\gt l_\gamma$ is not contact. Then 
 $\q$ is contact if and only if  $\ind\gt l_{<\gamma>}=0$. \\[.2ex]
{\sf (ii)} If $\gt l_\gamma$ is contact, but $\q$ is not contact,  then $\ind\gt l_{<\gamma>}=0$.  
\end{thm}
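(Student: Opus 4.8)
The plan is to make the contact criterion \eqref{def} completely explicit at a generic point and then read off both assertions. Since $L\gamma\subset V^*$ is open, a generic $\beta\in\q^*$ is $Q$-conjugate to one of the form $\beta=\ap+\gamma$ with $\ap\in\gt l^*=\Ann(V)$ and $\gamma$ the fixed generic point; as the (non)vanishing of $\beta(\q_\beta)$ is a $Q$-invariant property, it suffices to treat such $\beta$, with $\bar\ap:=\ap|_{\gt l_\gamma}$ generic in $\gt l_\gamma^*$. First I would compute the coadjoint stabiliser in the splitting $\q^*=\gt l^*\oplus V^*$. A direct calculation gives, for $(\ell,v)\in\q$,
\[
\ad^*(\ell,v)\beta=\bigl(\ad^*_{\gt l}(\ell)\ap+\phi(v),\ \ell{\cdot}\gamma\bigr),
\]
where $\phi\colon V\to\gt l^*$, $\phi(v)(\ell')=\gamma(\ell'{\cdot}v)$, is injective with image $\Ann(\gt l_\gamma)$ (both facts forced by $\gt l{\cdot}\gamma=V^*$). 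Setting this to zero shows $\ell\in\gt l_\gamma$, that $v$ is then uniquely determined by $\ell$, and that the surviving condition is exactly $\ell\in(\gt l_\gamma)_{\bar\ap}$. Since $\ind\gt l_\gamma=1$ in case {\tt (B)}, I obtain $\q_\beta=\bbk(z,v_0)$, where $\bbk z=(\gt l_\gamma)_{\bar\ap}$ and $v_0\in V$ is the unique solution of $\phi(v_0)=-\ad^*_{\gt l}(z)\ap$.

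By \eqref{def}, $\q$ is contact iff $\beta(\q_\beta)=\ap(z)+\gamma(v_0)\neq0$ for generic $\ap$; and $\ap(z)=\bar\ap(z)$ since $z\in\gt l_\gamma$. Applying \eqref{def} to $\gt l_\gamma$ (of index $1$), the first term satisfies: $\bar\ap(z)\neq0$ generically iff $\gt l_\gamma$ is contact. For the second term, observe that $\gamma\in V^*=\gt l{\cdot}\gamma$ produces some $s\in\gt l$ with $s{\cdot}\gamma=\gamma$, so $s\in\gt l_{<\gamma>}$ and $\gt l_{<\gamma>}=\gt l_\gamma\oplus\bbk s$, with $\gt l_\gamma=\ker\chi$ an ideal of codimension $1$, where $\chi(\ell)\gamma=\ell{\cdot}\gamma$. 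Evaluating the defining relation $\phi(v_0)(\ell')=-(\ad^*_{\gt l}(z)\ap)(\ell')$ at $\ell'=s$ and using $s{\cdot}\gamma=\gamma$ yields $-\gamma(v_0)=\ap([z,s])$, i.e. $\gamma(v_0)=\bar\ap([s,z])$ (the bracket lies in the ideal $\gt l_\gamma$). Hence
\[
\beta(\q_\beta)=\bar\ap(z)+\bar\ap([s,z]).
\]

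It remains to identify the second term with $\ind\gt l_{<\gamma>}$, which I expect to be the main obstacle. I would compute the generic stabiliser of $\gt l_{<\gamma>}=\gt l_\gamma\oplus\bbk s$ directly: for $\hat\ap\in\gt l_{<\gamma>}^*$ with restriction $\bar\ap$, every bracket lands in the ideal $\gt l_\gamma$, so the stabiliser depends only on $\bar\ap$ and consists of the $(\ell,t)\in\gt l_\gamma\oplus\bbk s$ with $\ad^*_{\gt l_\gamma}(\ell)\bar\ap=t\,D^*\bar\ap$ and $(D^*\bar\ap)(\ell)=0$, where $D=\ad(s)|_{\gt l_\gamma}$ and $D^*$ is its transpose. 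Using $\ad^*_{\gt l_\gamma}(\gt l_\gamma)\bar\ap=\Ann(\bbk z)$, I would show: if $\bar\ap(Dz)=\bar\ap([s,z])\neq0$, the first equation forces $t=0$, whence $\ell\in\bbk z$, and the second then forces $\ell=0$, so $\ind\gt l_{<\gamma>}=0$; while if $\bar\ap([s,z])=0$ a nonzero solution survives and $\ind\gt l_{<\gamma>}\ge1$. Thus $\ind\gt l_{<\gamma>}=0$ iff $\bar\ap([s,z])\neq0$ for generic $\bar\ap$.

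Both statements now follow, using that a rational function of the generic point $\bar\ap$ (with $z=z(\bar\ap)$) is either identically zero or generically nonzero. In case {\sf (i)}, $\gt l_\gamma$ not contact gives $\bar\ap(z)=0$, hence $\beta(\q_\beta)=\bar\ap([s,z])$, so $\q$ is contact iff $\bar\ap([s,z])\neq0$ iff $\ind\gt l_{<\gamma>}=0$. In case {\sf (ii)}, $\gt l_\gamma$ contact gives $\bar\ap(z)\neq0$; if $\q$ is not contact then $\bar\ap(z)+\bar\ap([s,z])=0$ generically, so $\bar\ap([s,z])=-\bar\ap(z)\neq0$ and $\ind\gt l_{<\gamma>}=0$. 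Beyond the index computation for $\gt l_{<\gamma>}$, the part requiring care is the simultaneous genericity bookkeeping: ensuring that $\bar\ap$, the restriction of a generic $\ap$, is generic enough in $\gt l_\gamma^*$ for $z$, $\q_\beta$, and the two dichotomies above to be valid at once.
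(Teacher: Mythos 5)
Your argument is correct, and the pivotal identity checks out: on Example~\ref{dirpr} your formula gives $\beta(\q_\beta)=\bar\ap(z)+\bar\ap([s,z])=\ap(E_{12})+2\ap(E_{12})=3$, matching the element $u=E_{12}+2y$ found there. The overall strategy --- normalise $\beta=\ap+\gamma$, produce $s$ with $s{\cdot}\gamma=\gamma$ from the openness of $L\gamma$, and reduce everything to data on $\gt l_\gamma^*$ --- coincides with the paper's, but the two key steps are carried out differently. Where you compute $\q_\beta=\bbk(z,v_0)$ explicitly and evaluate $\beta$ on it, the paper invokes the conicality criterion of Proposition~\ref{equiv1} and phrases the obstruction as $x{\cdot}\beta\in\beta+\ad^*(\gt l_\gamma){\cdot}\beta$; testing that membership against $\Ann(\bbk z)=\ad^*(\gt l_\gamma){\cdot}\bar\ap$ recovers exactly your scalar $\bar\ap(z)+\bar\ap([s,z])$, so the two criteria are literally the same number. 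More substantially, where you determine $\ind\gt l_{<\gamma>}$ by solving for the stabiliser inside the codimension-one ideal decomposition $\gt l_{<\gamma>}=\gt l_\gamma\oplus\bbk s$ --- your dichotomy on $(D^*\bar\ap)(z)=\bar\ap([s,z])$ is right, the one step a reader might pause at being that $(D^*\bar\ap)(\ell_0)=(\ad^*(\ell_0)\bar\ap)(\ell_0)=-\bar\ap([\ell_0,\ell_0])=0$ makes the second stabiliser condition automatic when $t\ne 0$ is possible --- the paper gets the same dichotomy more softly from the parity of coadjoint orbit dimensions: $\dim L_{<\gamma>}\tilde\beta$ exceeds $\dim L_\gamma\bar\ap$ by at most $1$, hence by $0$ or $2$. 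Your route is longer but buys a single closed formula that treats {\sf (i)} and {\sf (ii)} uniformly and from which Proposition~\ref{impl-ii} can also be read off (when both terms are generically nonzero, the question is whether they cancel, which is precisely the principal-element condition). The genericity bookkeeping you flag at the end is unproblematic: all conditions involved are open and constructible, the restriction $\ap\mapsto\bar\ap$ is a surjective linear map, and the correspondence between generic $Q$-orbits and generic $L_\gamma$-orbits on $\gt l_\gamma^*$ is exactly the reduction the paper itself uses.
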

\begin{proof}
Each generic $Q$ orbit in $\q^*$ contains a point $\ap$ such that $\ap|_{V}=\gamma$. 
Set $\beta=\ap|_{\gt l_\gamma}$. 
Without loss of generality assume that $\beta$ is a generic point of $\gt l_\gamma^*$. 
For case 
{\sf (ii)}, assume that $\beta$ is contact. 
Since $\overline{L\gamma}=V$, there is $x\in\gt l$ such that $\ad^*(x)(\gamma)=\gamma$. 
Clearly $\gt l_{<\gamma>}=\bbk x\oplus\gt l_\gamma$ and $[x,\gt l_\gamma]\subset\gt l_{\gamma}$. We say that $x$ acts on 
$\gt l_{\gamma}$ and on $\gt l_{\gamma}^*$.  

Since
$\ad^*(V){\cdot}\gamma=\Ann(\gt l_\gamma)\subset\gt l^*$, the orbit $Q\ap$ is conical if and only if 
$x{\cdot}\beta\in \beta +\ad^*(\gt l_\gamma){\cdot}\beta$. 

{\sf (i)} 
Since $\gt l_\gamma$ is not contact, $L_\gamma \beta\subset\gt l_\gamma^*$ is a conical orbit, 
see Proposition~\ref{equiv1}. Thus, $Q\ap$ is conical if and only if  $x{\cdot}\beta \in \ad^*(\gt l_\gamma){\cdot}\beta$.

If $x{\cdot}\beta \not \in \ad^*(\gt l_\gamma){\cdot}\beta$, then  $\dim L_{<\gamma>}\beta\ge 1+\dim L\beta$. Since each coadjoint orbit is even-dimensional, 
$\ind\gt l_{<\gamma>}\le (1+\dim\gt l)-(2+\dim L\beta)\le \ind\gt l-1=0$. 

Suppose that $x{\cdot}\beta \in \ad^*(\gt l_\gamma){\cdot}\beta$. 
Let $\tilde\beta\in \gt l_{<\gamma>}^*$ be an extension of $\beta$.  
Then $x{\cdot}\beta+\ad^*(\gt l_\gamma){\cdot}\beta=\ad^*(\gt l_\gamma){\cdot}\beta$ is the restriction 
of $\ad^*(\gt l_{<\gamma>}){\cdot}\tilde\beta$. Hence $\dim\ad^*(\gt l_{<\gamma>}){\cdot}\tilde\beta\le 1+\dim\ad^*(\gt l_\gamma){\cdot}\beta$. Because each coadjoint orbit is even-dimensional, 
we have $\dim\ad^*(\gt l_{<\gamma>}){\cdot}\tilde\beta=\dim\ad^*(\gt l_\gamma){\cdot}\beta$
and 
$\ind  \gt l_{<\gamma>}=(1+\dim\gt l) - \dim L_\gamma\beta=2$. 

{\sf (ii)} Since $\q$ is not contact, $Q\ap$ is conical by Proposition~\ref{equiv1}. Hence $x{\cdot}\beta\in \beta +\ad^*(\gt l_\gamma){\cdot}\beta$. 
If $x{\cdot}\beta \in \ad^*(\gt l_\gamma){\cdot}\beta$, then also $\beta \in\ad^*(\gt l_\gamma){\cdot}\beta$, which is a contradiction. 
Thereby again $\dim L_{<\gamma>}\beta\ge 1+\dim L\beta$. 
\end{proof}

Part~{\sf (ii)} of Theorem~\ref{impl} states that if $\gt l_\gamma$ is contact and $\ind \gt l_{<\gamma>}\ne 0$, then 
$\gt q$ is contact.  If $\gt l_\gamma$ is contact and $\ind \gt l_{<\gamma>}=0$, then $\gt q$ may be contact as well. 
This can be checked with the help of a special semisimple element $s\in    \gt l_{<\gamma>}$. 

Let $\gt h=\Lie H$ be a Lie algebra of index zero and $H\beta\subset\gt h^*$ the open orbit. For 
each $\eta\in H\beta$ there is a unique element $s_\eta\in\gt h$ such that $\ad^*(s_\eta)\eta=\eta$. Clearly each  
$s_\eta$ is $H$-conjugate to $s:=s_\beta$.  The semisimpe part of $s$ multiplies $\beta$ by $1$. 
Since $s$ is unique, it 
 is $\ad$-semisimple. This $s$, introduced in \cite{Ooms}, is called a {\it principal element} of $\gt h$.  

\begin{prop}\label{impl-ii}
Suppose that $\gt l_\gamma$ is contact and $\ind \gt l_{<\gamma>}=0$. Let $s\in \gt l_{<\gamma>}$ be a 
principal element. Then $\gt q$ is contact if and only if $\ad^*(s)\gamma\ne\gamma$.
\end{prop}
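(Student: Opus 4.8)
The plan is to reuse the reduction obtained in the proof of Theorem~\ref{impl}. As there, pick a generic $\ap\in\q^*$ with $\ap|_V=\gamma$ and set $\beta=\ap|_{\gt l_\gamma}$, where we may assume $\beta$ is a generic point of $\gt l_\gamma^*$; recall that $\gt l_{<\gamma>}=\bbk x\oplus\gt l_\gamma$ with $\ad^*(x)\gamma=\gamma$ and $[x,\gt l_\gamma]\subset\gt l_\gamma$. Exactly as in that proof, $Q\ap$ is conical if and only if $x{\cdot}\beta\in\beta+\ad^*(\gt l_\gamma){\cdot}\beta$, so by Proposition~\ref{equiv1} everything reduces to deciding this membership. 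The point of the argument is to translate it into the eigenvalue of a principal element on $\gamma$.

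First I would record that $\gt l_\gamma$ is an ideal of codimension one in $\gt l_{<\gamma>}$ and that $\gt l_{<\gamma>}$ acts on the line $\bbk\gamma$ through a Lie-algebra character $\chi$ with $\chi(x)=1$ and $\chi(\gt l_\gamma)=0$, so that $\ad^*(w)\gamma=\chi(w)\gamma$ for $w\in\gt l_{<\gamma>}$. Since $\chi$ vanishes on the derived subalgebra, it is invariant under the adjoint action of the connected group $L_{<\gamma>}$; as any two principal elements are conjugate, $\chi(s)$ does not depend on the choice of $s$. Hence $\ad^*(s)\gamma=\chi(s)\gamma$, and the condition $\ad^*(s)\gamma\ne\gamma$ is simply $\chi(s)\ne 1$, which is well posed.

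Next, using $\ind\gt l_{<\gamma>}=0$, I would choose a point $\tilde\beta$ in the open $L_{<\gamma>}$-orbit $\Omega\subset\gt l_{<\gamma>}^*$ whose restriction to $\gt l_\gamma$ is $\beta$. Such a $\tilde\beta$ exists for generic $\beta$: the restriction $\gt l_{<\gamma>}^*\to\gt l_\gamma^*$ is a surjective linear map, so the image of $\Omega$ is a dense constructible set and contains a dense open subset of $\gt l_\gamma^*$, which I intersect with the locus where $\beta$ is generic. Writing the principal element at $\tilde\beta$ as $s_{\tilde\beta}=ax+s_0$ with $s_0\in\gt l_\gamma$, one has $a=\chi(s_{\tilde\beta})=\chi(s)$. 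Restricting the defining identity $\ad^*(s_{\tilde\beta})\tilde\beta=\tilde\beta$ to the ideal $\gt l_\gamma$ and using $[s_{\tilde\beta},\gt l_\gamma]\subset\gt l_\gamma$ gives the key relation
\[
a\,(x{\cdot}\beta)+\ad^*(s_0){\cdot}\beta=\beta ,
\]
where $\ad^*(s_0){\cdot}\beta$ denotes the coadjoint action inside $\gt l_\gamma$.

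Finally I would exploit that $\gt l_\gamma$ is contact: by Proposition~\ref{equiv1} applied to $\gt l_\gamma$, for generic $\beta$ we have $\beta\notin\ad^*(\gt l_\gamma){\cdot}\beta$. If $x{\cdot}\beta=\beta+\ad^*(\eta){\cdot}\beta$ for some $\eta\in\gt l_\gamma$, substituting into the key relation yields $(a-1)\beta\in\ad^*(\gt l_\gamma){\cdot}\beta$, forcing $a=1$; conversely, if $a=1$ the relation reads $x{\cdot}\beta=\beta-\ad^*(s_0){\cdot}\beta\in\beta+\ad^*(\gt l_\gamma){\cdot}\beta$. Thus $x{\cdot}\beta\in\beta+\ad^*(\gt l_\gamma){\cdot}\beta$ if and only if $a=1$, that is, if and only if $\ad^*(s)\gamma=\gamma$. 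Combined with the reduction of the first paragraph, $\q$ is contact precisely when $\ad^*(s)\gamma\ne\gamma$. I expect the main obstacle to be the genericity bookkeeping in the third step --- producing a single $\beta$ that is generic in $\gt l_\gamma^*$ and at the same time lifts into the open orbit $\Omega$ --- together with the verification that $\chi(s)$ is independent of the chosen principal element; the algebraic identity and its consequence are then routine.
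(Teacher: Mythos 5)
Your proposal is correct and follows essentially the same route as the paper: the same reduction to deciding whether $x{\cdot}\beta\in\beta+\ad^*(\gt l_\gamma){\cdot}\beta$, the same lift $\tilde\beta$ with principal element $s=ax+s_0$, and the same use of the relation $s{\cdot}\beta=\beta$ together with the non-conicality of $L_\gamma\beta$ to conclude that membership holds exactly when $a=1$, i.e.\ when $\ad^*(s)\gamma=\gamma$. Your extra bookkeeping (the character $\chi$, independence of the choice of principal element, and the openness of the restriction map guaranteeing a generic $\beta$ lifts into the open orbit) merely spells out steps the paper leaves implicit.
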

\begin{proof}
We keep notation introduced in the proof of Theorem~\ref{impl}. Recall that 
$\gt q$ is not contact if and only if $x{\cdot}\beta \in \beta+\ad^*(\gt l_\gamma){\cdot}\beta$. 
Let again $\tilde\beta\in\gt l_{<\gamma>}^*$ be an extension of $\beta$ and let
$s\in \gt l_{<\gamma>}$ be the principal element associated with $\tilde\beta$. Since 
$\gt l_\gamma$ is an ideal of $\gt l_{<\gamma>}$, the element $s$ acts on $\gt l_\gamma^*$ and 
$s{\cdot}\beta=\beta$. Since the orbit $L_\gamma\beta$ is not conical, we have $s\not\in\gt l_\gamma$, i.e., 
$s\in ax+\gt l_\gamma$ for some $a\in\bbk^{^\times}$.  
Now 
\[
x{\cdot}\beta \in \beta+\ad^*(\gt l_\gamma){\cdot}\beta \ \Leftrightarrow \ a=1 \ \Leftrightarrow \ \ad^*(s)\gamma=\gamma.
\]
This finishes the proof. 
\end{proof}

\begin{ex} \label{ex-impl}
We apply Proposition~\ref{impl-ii} to the Lie algebra $\gt q=\bbk\ltimes V$ of Example~\ref{ex-k}.
Let $x,y\in V$ be linearly independent eigenvectors of $\bbk$. We can decompose $\gt q$ as
$\gt l\ltimes V_1$, where $\gt l=\bbk\ltimes \bbk x$ and $V_1=\bbk y$. 
Let $\gamma\in V_1^*$ be a non-zero point. 
Then 
$\gt l_\gamma=\bbk x$ is contact and 
$\gt l=\gt l_{<\gamma>}$ is of index zero.  For a principal element $s\in\gt l$, we have 
$[s,x]=-x$. Thus, $\ad^*(s)\gamma=\gamma$ if and only if $[s,y]=-y$, i.e., if 
the characters $\lambda$ and $\mu$ are equal.  
\end{ex}

Further examples of semi-direct products will appear in Section~\ref{sec-aff}. 

Let $\q$ be a $3$-dimensional Lie algebra with a basis $\{s,x,y\}$ such that $[s,x]=x$, $[s,y]=y$, and $[x,y]=0$.
It appeared in Example~\ref{ex-impl}. The decomposition $\gt q=\gt l\ltimes V$ with 
$\gt l=\langle s,x\rangle_{\bbk}$, $V=\bbk y$ satisfies the assumption that $L$ acts on $V^*$ with an open orbit $L\gamma$.
As we have seen, $\gt l_\gamma=\bbk x$ is contact and there is a generic stabilisier for its coadjoint action, while 
$\gt q$ is not contact and its  coadjoint action has no generic stabiliser. 
Furthermore, $\gt l_\gamma$ is quasi-reductive, while $\gt q$ is not.

If we impose a stronger condition on the  stabiliser $L_\gamma$, then the reduction from $Q$ to $L_\gamma$ works better.
Following \cite{MRS}, we say that $\gt h=\Lie H$ is {\it strongly quasi-reductive}, if $\gt h$ is quasi-reductive
and the centre $Z\subset H$ consists of semisimple elements, in other words, if $H_\beta$ is reductive for
some $\beta\in\gt h^*$. In certain cases, this definition depends on the group $H$. Although 
$\Lie\!(\bbk,+)\cong\Lie\!(\bbk^{\!^\times},\times)$, the first one is not strongly quasi-reductive, while the second is. 
Recall from Section~\ref{expl}, see Remark~\ref{rem-qred}, that if $H_\beta$ is reductive for one $\beta$, then $H_\ap$ is reductive for 
a generic point $\ap\in\gt h^*$. 

In the following proposition, we do not assume that $\ind\gt q=1$.

\begin{prop}[{cf. \cite[Corollary 2.9.]{Dima03-b}, \cite[Lemma\,4.3]{MRS}}] \label{impl2}
Let $Q=L\ltimes\exp(V)$ be such that $L\gamma\subset V^*$ is an open orbit.  
Then $\q$ is strongly quasi-reductive if and only if 
$\gt l_\gamma=\Lie L_\gamma$ is strongly quasi-reductive. 
\end{prop}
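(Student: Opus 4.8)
The plan is to construct, for a well-chosen $\ap\in\q^*$, an isomorphism of algebraic groups $Q_\ap\cong(L_\gamma)_\beta$ with $\beta=\ap|_{\gt l_\gamma}$; both implications then follow at once, because $\gt h=\Lie H$ is strongly quasi-reductive precisely when $H_\eta$ is reductive for one $\eta\in\gt h^*$ --- equivalently, by Remark~\ref{rem-qred}, for generic $\eta$ --- and reductivity is an isomorphism invariant. Note that $\ind\q$ plays no role here, in agreement with the statement.

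First I would fix $\q=\gt l\oplus V$ as a vector space, so $\q^*=\gt l^*\oplus V^*$ with $\gt l^*=\Ann(V)$, and pick $\ap$ with $\ap|_V=\gamma$ and $\ap_{\gt l}:=\ap|_{\gt l}$ any extension of $\beta$. Since $\gt l$ and $V$ are $L$-stable, the coadjoint $L$-action is diagonal, $\mathrm{Ad}^{*}(l)\ap=(\mathrm{Ad}^{*}_{\gt l}(l)\ap_{\gt l},\,\mathrm{Ad}^{*}_V(l)\ap_V)$. Since $V$ is an abelian ideal, $\ad^*(w)^2\ap=0$, so $\mathrm{Ad}^{*}(\exp w)\ap=\ap+\mu_w$, where $\mu_w\in\Ann(\gt l_\gamma)\subset\gt l^*$ is given by $\mu_w(\xi)=\gamma([\xi,w])$. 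Here the open-orbit hypothesis enters decisively: it says exactly that the pairing $\gt l\times V\to\bbk$, $(\xi,w)\mapsto\gamma([\xi,w])$, is right non-degenerate, so $w\mapsto\mu_w$ is a linear isomorphism $V\isom\Ann(\gt l_\gamma)$.

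Next I would identify $Q_\ap$. Since restriction $\q^*\to V^*$ is $Q$-equivariant and $\exp(V)$ acts trivially on $V^*$, we get $Q_\ap\subseteq L_\gamma\ltimes\exp(V)$. For $q=l\exp(w)$ with $l\in L_\gamma$, the equation $q\cdot\ap=\ap$ reads $\mathrm{Ad}^{*}_{\gt l}(l)(\ap_{\gt l}+\mu_w)=\ap_{\gt l}$. Restricting this to the $\mathrm{Ad}(l)$-stable subspace $\gt l_\gamma$ annihilates $\mu_w$ and yields $\mathrm{Ad}^{*}_{\gt l_\gamma}(l)\beta=\beta$, i.e. $l\in(L_\gamma)_\beta$; conversely, for each such $l$ the required correction $\mu_w=\mathrm{Ad}^{*}_{\gt l}(l^{-1})\ap_{\gt l}-\ap_{\gt l}$ lies in $\Ann(\gt l_\gamma)$ and hence equals $\mu_w$ for a \emph{unique} $w\in V$. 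Thus the quotient morphism $L_\gamma\ltimes\exp(V)\to L_\gamma$ restricts to a bijection $Q_\ap\to(L_\gamma)_\beta$ with kernel $Q_\ap\cap\exp(V)=\{e\}$; in characteristic zero a bijective morphism of algebraic groups is an isomorphism, so $Q_\ap\cong(L_\gamma)_\beta$.

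Finally I would read off the equivalence. If $\gt l_\gamma$ is strongly quasi-reductive, choose $\beta$ with $(L_\gamma)_\beta$ reductive and build $\ap$ as above; then $Q_\ap\cong(L_\gamma)_\beta$ is reductive, so $\q$ is strongly quasi-reductive. Conversely, if $\q$ is strongly quasi-reductive, then $Q_\ap$ is reductive for generic $\ap$; a generic $\ap$ has $\ap|_V$ in the open orbit $L\gamma$, so after replacing $\ap$ by $l_0\cdot\ap$ for a suitable $l_0\in L$ we may assume $\ap|_V=\gamma$, whence $(L_\gamma)_\beta\cong Q_\ap$ is reductive for $\beta=\ap|_{\gt l_\gamma}$. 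The one genuinely delicate point is the unique solvability of the correcting element $w$, and it rests entirely on the open-orbit hypothesis through the non-degeneracy of $(\xi,w)\mapsto\gamma([\xi,w])$; everything else is bookkeeping with the semidirect-product coadjoint action.
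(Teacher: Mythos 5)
Your proof is correct and follows essentially the same route as the paper: both fix $\ap$ with $\ap|_V=\gamma$, use the open-orbit hypothesis to identify $\ad^*(V){\cdot}\gamma$ with $\Ann(\gt l_\gamma)$ via an injective map $V\to\gt l^*$, and deduce $Q_\ap\cong(L_\gamma)_\beta$ with $Q_\ap\cap\exp(V)=\{e\}$. You merely spell out the correcting-element computation that the paper leaves implicit.
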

\begin{proof}
Each generic $Q$ orbit in $\q^*$ has a point $\ap$ such that $\ap|_{V}=\gamma$. Clearly $Q_\ap\subset L_\gamma\ltimes\exp(V)$.
We have 
\[
\exp(V)\ap=\ap+\ad^*(V){\cdot}\gamma,    \ \ \text{ where }  \  \ad^*(V){\cdot}\gamma|_V=0.
\] 
Furthermore, 
 $\ad^*(V){\cdot}\gamma=\Ann(\gt l_\gamma)\subset\gt l^*$ is of dimension $\dim V$
 and $\ad^*(v)(\gamma)\ne 0$ for each non-zero $v\in V$. 
Hence $Q_\ap  =Q_\ap/(Q_\ap\cap\exp(V)) \cong (L_\gamma)_\beta$ for the restriction $\beta=\ap|_{\gt l}$. Since $\beta$ is a generic point of $\gt l^*$, we are done.  
\end{proof}

Recall that a (strongly) quasi-reductive Lie algebra of index $1$ is contact, see~\eqref{qred}. 
Therefore Theorem~\ref{impl} and Propositions~\ref{impl-ii}, \ref{impl2} can be employed for obtaining a classification of contact semi-direct products
$L\ltimes\exp(V)$.  Under some restrictions on $L$ and $V$, a reasonable description is probably possible. 
Similar results have been obtained before. 
The semi-direct products $L\ltimes\exp(V)$ of index zero, where $L$ is reductive,
are classified in \cite{El-fr} under the assumption that 
$V$ is a simple $L$-module or the commutator group of $L$ is simple. 

\section{Invariants and semi-invariants} \label{sec-inv}

The symmetric algebra $\gS(\q)$ is identified with the graded ring $\bbk[\q^*]$ of polynomial functions on $\q^*$. The adjoint action of $\q$ extends to a representation of 
$\q$ on $\gS(\q)$. Since $Q$ is connected, we have 
$\bbk[\q^*]^Q=\gS(\q)^{Q}=\gS(\q)^{\q}$. The elements of this ring are called {\it symmetric invariants} of $\gt q$. 
For $F\in\gS(\q)$,
we have 
\[
F\in\gS(\q)^{\q} \ \Leftrightarrow \ \xi{\cdot}F=0 \ \forall \xi\in\q.
\] 
Then $F\in\gS(\q)$ is called a {\it semi-invariant of $\q$} if $QF\subset\bbk F$ or equivalently if 
$\xi{\cdot}F\in\bbk F$ for each $\xi\in\q$. Let $\gS(\gt q)_{\sf si}\subset\gS(\q)$ be the subring generated by the 
semi-invariants of $\q$.  Note that both $\gS(\q)^{\q}$ and $\gS(\gt q)_{\sf si}$ are graded subalgebras of $\gS(\q)$.  

\subsection{General facts}  \label{gen}
Let $f\in\bbk(\q^*)^Q$ be a rational $Q$-invariant. Then $f=u/v$ for some coprime polynomials $u,v\in\bbk[\q^*]$. 
Since $\bbk[\gt q^*]$ is a unique factorisation domain, such a presentation is unique up to scalar multiples. Therefore $u$ and $v$ are semi-inva\-riants of $Q$.  

Suppose $\ind\gt q=1$. Then $\trdeg \bbk(\q^*)^Q=1$ by~\eqref{tr-ind}. Since   
$\Quot\!(\bbk[\q^*]^Q)$ is contained in $\bbk(\q^*)^Q$, we obtain $\trdeg\gS(\q)^{\q}\le 1$. It may happen that 
$\gS(\q)^{\q}=\bbk$. Suppose $\gS(\q)^{\q}\ne\bbk$. 
Then $\gS(\q)^{\q}$ is generated by one element. Next we give an explanation of this standard fact. 

Let $F,F_1\in \gS(\q)^{\q}$ be two non-constant homogeneous polynomials. 
Then they are algebraically dependent and hence $F^a=sF_1^b$ for some coprime natural numbers $a,b$ and $s\in\bbk^{\!^\times}$. 
Thereby $H=\sqrt[b]{F}\in\bbk[\q^*]$ and the orbit $QH$ has at most $b$ elements. Since the group $Q$ is connected, $H\in\bbk[\q^*]^Q$. 
If  $F$ is an invariant of the minimal degree, then $F_1\in\bbk[F]$. Thus $\bbk[\q^*]=\bbk[F]$. 


In any case, $\bbk(\gt q^*)^Q\ne \bbk$. There is at least 
one  rational invariant $f=u/v\not\in\bbk$, where $u,v\in\gS(\q)_{\sf si}$.   Hence $\gS(\q)_{\sf si}\ne\bbk$. 
In view of classical results of L\"uroth and Castelnouvo, $\bbk(\gt q^*)^Q=\bbk(f)$ for some $f$. 
In Section~\ref{sec-set}, we explain how to find such a generator for a contact $\q$.

For any $\q$, 
let $\cW=\bigoplus_{i= 0}^{\dim\q} \cW^i$ be the graded skew-symmetric algebra of polynomial polyvector fields on $\q^*$. Over 
 $\gS(\q)$ it is generated by 
$\{\partial_i=\partial_{x_i}| 1\le i \le N\}$, where $N=\dim\q$ and%
$\{x_i\mid 1\le i\le N\}$ is a basis of $\q$. 
Let 
$$\pi=\sum_{i<j} [x_i,x_j] \partial_i \wedge \partial_j\in \cW^2$$ 
be the {\it Poisson tensor} of $\q$. 
Evaluating  $\pi$ at a point $\gamma\in\q^*$, we obtain $\pi(\gamma)=\textsl{d}_{\q}\gamma$. 
Then $d=\frac{1}{2}(\dim\q-\ind\q)$ is the largest number such that $\bwedge^d \pi\ne 0$. 

\begin{df}[{cf.~\cite[Sect.\,1]{fonya-et}\,\&\,\cite[Sect.\,4.1]{js}}] \label{def-fund}
A {\it fundamental semi-invariant} of $\gt q$ is a polynomial $\bp$ such that 
$\bwedge^{d}\pi=\bp {\eus R}$ with ${\eus R}\in \cW^{2d}$ and the zero set of ${\eus R}$ in $\gt q^*$  
has codimension grater than or equal to $2$. 
\end{df}

The zero set of $\bp$ is the union of divisors contained in $\q^*_{\sf sing}$. Thereby $\bp$ is indeed 
a semi-invariant of $\q$. 
By the construction, $\q$ has the codim--$2$ property if and only if $\bp=1$.

\begin{rmk} \label{semi}
Let us show that $\gS(\q)_{\sf si}\ne\bbk$ for 
 any $\q\ne 0$. Indeed,  if $\ind\q\ge 1$, then the argument with the inclusion 
$\bbk(\q^*)^Q\subset \Quot\gS(\q)_{\sf si}$ works. 
If $\ind\q=0$, then $N=2d$ and $\deg \bp = d \ge 1$ for a non-zero $\q$. 
The Lie algebras of index zero are called 
{\it Frobenius}. 
\end{rmk}

\subsection{Importance of regular invariants} 

\begin{prop}\label{inv1}
 If\/ $\ind\q=1$ and $\gS(\gt q)^{\gt q}=\bbk[F]$\/ for a non-constant $F$, then $\gt q$ is contact. 
\end{prop}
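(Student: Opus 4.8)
The plan is to verify the criterion~\eqref{def}: since $\ind\q=1$, the algebra $\q$ is contact exactly when $\ap(\q_\ap)\ne 0$ for a generic $\ap\in\q^*_{\sf reg}$. So it suffices to exhibit, for generic $\ap$, a nonzero element of the one-dimensional space $\q_\ap$ on which $\ap$ does not vanish. The natural candidate is the differential of the invariant $F$, and the whole argument amounts to evaluating $\ap$ on that differential.

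First I recall the standard fact that for a homogeneous $F\in\gS(\q)^\q$ and any $\ap\in\q^*$, the differential $\textsl{d}_\ap F$, regarded as an element of $(\q^*)^*=\q$, lies in the stabiliser $\q_\ap$. Indeed, rewriting the invariance $\xi{\cdot}F=0$ through the Lie--Poisson structure gives $\ap([\textsl{d}_\ap F,\xi])=0$ for every $\xi\in\q$, which is precisely the condition $\textsl{d}_\ap F\in\ker\textsl{d}_\q\ap=\q_\ap$. Here I may take $F$ homogeneous of some degree $m\ge 1$: since $\gS(\q)^\q$ is graded, I replace the given $F$ by a homogeneous generator of the ring $\gS(\q)^\q=\bbk[F]$ (compare the discussion in Section~\ref{gen}, where a non-constant $\gS(\q)^\q$ is shown to be generated by a single homogeneous invariant).

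The decisive input is Euler's identity $\ap(\textsl{d}_\ap F)=m\,F(\ap)$. Because $F$ is non-constant, the hypersurface $\{F=0\}$ is a proper closed subset of $\q^*$, so $F(\ap)\ne 0$, and hence $\textsl{d}_\ap F\ne 0$, on a non-empty open set. Intersecting this set with $\q^*_{\sf reg}$ (also open and non-empty since $\ind\q=1$) I obtain a generic $\ap$ for which $\dim\q_\ap=1$ and $0\ne\textsl{d}_\ap F\in\q_\ap$. Consequently $\q_\ap=\bbk\,\textsl{d}_\ap F$, whence $\ap(\q_\ap)=\bbk\cdot m F(\ap)\ne 0$, and $\q$ is contact by~\eqref{def}.

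I do not anticipate a serious obstacle, as the argument is short once the two ingredients are in place. The only points requiring care are the identification $\textsl{d}_\ap F\in\q_\ap$ --- that is, the translation of $Q$-invariance of $F$ into membership in the kernel of the skew form $\textsl{d}_\q\ap$ --- and the check that the two genericity conditions $F(\ap)\ne 0$ and $\dim\q_\ap=1$ cut out a common non-empty open subset, which is immediate since each of them holds on a dense open set.
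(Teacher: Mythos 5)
Your proof is correct. The only point that needs the small justification you already supply is the replacement of $F$ by a homogeneous generator, which is legitimate since $\gS(\q)^{\q}$ is a graded subalgebra (cf.\ Section~\ref{gen}). Your route is, however, not the one the paper takes. The paper argues geometrically: a homogeneous invariant is constant on every orbit $Q\alpha$, hence $Q\alpha$ cannot be conical when $F(\alpha)\ne 0$, and then Proposition~\ref{equiv1} (via Lemma~\ref{alg-con}) converts non-conicality of generic orbits into the contact property. You instead verify the criterion~\eqref{def} directly: the invariance of $F$ places $\textsl{d}_\alpha F$ in $\q_\alpha$, Euler's identity gives $\alpha(\textsl{d}_\alpha F)=m\,F(\alpha)$, and for generic $\alpha$ this nonzero vector spans the one-dimensional stabiliser, so $\alpha(\q_\alpha)\ne 0$. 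The two arguments are infinitesimally dual to each other --- Euler's identity is exactly the statement that a homogeneous function constant on a conical orbit must vanish there --- but yours is self-contained (it bypasses Proposition~\ref{equiv1} and the conicality discussion) and yields the extra information that $\q_\alpha=\bbk\,\textsl{d}_\alpha F$ at a generic point, i.e.\ an explicit generator of the generic stabiliser, which the paper's two-line proof does not record.
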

\begin{proof}
The invariant $F$ is constant on each orbit $Q\alpha\subset\gt q^*$. Hence $Q\alpha$ cannot be conical, if 
$F(\alpha)\ne 0$. The rest follows from Proposition~\ref{equiv1}. 
\end{proof}

\begin{cl} \label{no}
If\/  $\ind\gt q=1$ and the  
radical of $\gt q$ consists of $\ad$-nilpotent elements, then $\gt q$ is contact.
\end{cl}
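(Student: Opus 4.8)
The plan is to deduce the statement from Proposition~\ref{inv1} by producing a non-constant \emph{invariant}. Indeed, once we know $\gS(\q)^\q\ne\bbk$, the discussion in Section~\ref{gen} applies: since $\ind\q=1$, the ring $\gS(\q)^\q$ is then generated by a single non-constant homogeneous polynomial $F$, so $\gS(\q)^\q=\bbk[F]$, and Proposition~\ref{inv1} gives at once that $\q$ is contact. Thus the whole corollary reduces to exhibiting one non-constant invariant, and for this I would show that under the hypothesis every semi-invariant is automatically an invariant.

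The key step is therefore to prove that, when the radical consists of $\ad$-nilpotent elements, the character of any semi-invariant is trivial. A semi-invariant $F$ transforms by a character $\chi\colon Q\to\bbk^{\!^\times}$, whose differential $\lambda=d\chi\colon\q\to\bbk$ satisfies $\xi{\cdot}F=\lambda(\xi)F$ for all $\xi\in\q$ and kills $[\q,\q]$ (as $\chi$ lands in an abelian group); I would show $\lambda\equiv 0$. Fix a Levi--Malcev decomposition $\q=\es\oplus\rr$, let $\n=\Lie R_u(Q)$ be the Lie algebra of the unipotent radical of $Q$, and let $\te$ be a toral complement to $\n$ in $\rr$, so that $\q=\es\oplus\te\oplus\n$ as a vector space. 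On $\es=[\es,\es]\subset[\q,\q]$ the character $\lambda$ vanishes. On $\n$ it vanishes as well: for $y\in\n$ the subgroup $\exp(\bbk y)$ is one-parameter unipotent, and any homomorphism from it to the torus $\bbk^{\!^\times}$ is trivial, whence $\lambda(y)=0$.

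The remaining, and genuinely delicate, piece is $\te$, and this is exactly where the hypothesis enters. The elements of $\te$ are $\ad$-semisimple; by assumption they are also $\ad$-nilpotent, and an operator that is simultaneously semisimple and nilpotent is zero, so $\ad t=0$ and hence $\te\subseteq\z(\q)$ is central. But a central element acts trivially under the coadjoint action, i.e.\ $\ad^*(t)=0$, so $t{\cdot}F=0=\lambda(t)F$ for each $t\in\te$; as $F\ne 0$ this forces $\lambda|_\te=0$. Combining the three pieces yields $\lambda=0$ on $\q=\es\oplus\te\oplus\n$, so $F$ is an invariant, and consequently $\gS(\q)_{\sf si}\subseteq\gS(\q)^\q$.

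It remains to assemble the pieces. Since $\ind\q=1$, the discussion in Section~\ref{gen} (see also Remark~\ref{semi}) gives $\bbk(\q^*)^Q\ne\bbk$, hence a non-constant rational invariant $f=u/v$ with $u,v\in\gS(\q)_{\sf si}$; in particular $\gS(\q)_{\sf si}$ contains a non-constant element. By the previous step this element lies in $\gS(\q)^\q$, so $\gS(\q)^\q\ne\bbk$, and the reduction above finishes the proof. I expect the main obstacle to be the toral part $\te$ of the radical: the naive ``a character kills unipotent elements'' argument disposes of $\es$ and $\n$ but says nothing about $\te$, and it is precisely the $\ad$-nilpotency hypothesis, forcing $\te$ to be central and therefore to act trivially on $\q^*$, that closes this gap.
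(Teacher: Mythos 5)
Your proposal is correct and follows essentially the same route as the paper: both reduce to showing that under the hypothesis every semi-invariant is an invariant, so that $\gS(\q)^\q\supseteq\gS(\q)_{\sf si}\ne\bbk$, and then conclude via the discussion of Section~\ref{gen} and Proposition~\ref{inv1}. The only difference is in the key step: the paper observes directly that each $\ad$-nilpotent element of $\mathrm{Rad}(\q)$ acts nilpotently on every $\gS^k(\q)$ and hence annihilates any $\q$-stable line, after which the character factors through the semisimple quotient $\q/\mathrm{Rad}(\q)$; your three-way split $\q=\es\oplus\te\oplus\n$ reaches the same conclusion with slightly more algebraic-group structure theory and is equally valid.
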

\begin{proof}
Let $\mathrm{Rad}(\gt q)\lhd \q$ be the  radical of $\q$. 
Each  $x\in\mathrm{Rad}(\gt q)$ is $\ad$-nilpotent, hence it acts as a nilpotent element on every   $\gS^k(\gt q)$. 
Therefore each semi-invariant of $\gt q$ in $\gS(\gt q)$ is invariant under $x$. 
In other words, $\mathrm{Rad}(\gt q)$ acts trivially on each $\q$-stable line $\bbk u\subset\gS(\q)$. 
Then $\bbk u$ gives rise to a $1$-dimensional representation of the semisimple algebra 
$\q/\mathrm{Rad}(\gt q)$. 
Thus, each semi-invariant of $\gt q$ is an invariant 
and $\gS(\gt q)^{\gt q}\ne\bbk$. 
\end{proof}

Extending the ground field we do not violate the conditions of Corollary~\ref{no}. Hence its statement holds over any 
field.  

\begin{ex} \label{ex-ohne}
There are contact Lie algebras with no regular invariants.  \\[.2ex]
{\sf (a)} A Borel subalgebra  $\gt b\subset\gt{sl}_3$ 
is contact. A generic stabiliser for $\gt b^*$ is a subspace of the Cartan subalgebra that commutes with the centre of the nilpotent 
radical. One sees easily that $\gS(\gt b)^{\gt b}=\bbk$. 
\\[.3ex]
{\sf (b)} Consider a maximal parabolic subalgebra $\gt p\subset\gt{sp}_4$ such that 
$\gt p=\gt{gl}_2\ltimes\gt n$, where the nilpotent radical $\gt n$ is a $3$-dimensional Heisenberg Lie algebra. 
It is contact and a generic stabiliser
for $\gt p^*$ is a  Cartan subalgebra of $\gt{sl}_2$. 
Proper parabolic subalgebras of simple Lie algebras have only trivial symmetric invariants,
see e.g. \cite[Corollary 2.11]{Dima03-b}. 
\end{ex}

\subsection{The ring generated by semi-invariants} \label{gen-semi}

\begin{prop} \label{free}
Let $\gt q$ be a contact Lie algebra. Then 
$\gS(\gt q)_{\sf si}$ has an algebraically independent set of generators
$\{H_1,\ldots,H_m\}$ such that each $H_i$ is
an irreducible homogeneous semi-invariant.
Furthermore, the fundamental semi-invariant $\bp$ of $\gt q$ is equal to 
$\prod_{i=1}^m H_i^{a_i}$ with $a_i\ge 0$.
\end{prop}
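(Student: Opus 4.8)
The plan is to recognise $(\widetilde Q,\q^*)$ as a prehomogeneous vector space and then to quote the structure theorem of Sato--Kimura \cite{SK}. Since $\q$ is contact it has index $1$, so by Proposition~\ref{equiv1} a generic $Q$-orbit in $\q^*$ is not conical; equivalently, the extended group $\widetilde Q=Q\times\bbk^{\!^\times}$ acts on $\q^*$ with a dense open orbit $Y$, cf.~\eqref{Y}. Thus $(\widetilde Q,\q^*)$ is a prehomogeneous vector space over our algebraically closed field of characteristic zero.

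First I would reconcile the two notions of invariance. A homogeneous polynomial of degree $k$ is precisely a relative invariant of the scaling action of $\bbk^{\!^\times}$ (with character a power of the scaling parameter). Hence a homogeneous semi-invariant of $Q$ is exactly a relative invariant of $\widetilde Q$, and conversely every relative invariant of $\widetilde Q$, being in particular a $\bbk^{\!^\times}$-relative invariant, is a homogeneous semi-invariant of $Q$. Since $\bbk[\q^*]$ is graded and the homogeneous components of a semi-invariant are again semi-invariants with the same character, the ring $\gS(\q)_{\sf si}$ is generated by the relative invariants of $\widetilde Q$.

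Next I would apply \cite{SK}. Writing $\q^*\setminus Y=\bigcup_j S_j$ and letting $S_1,\dots,S_m$ be the irreducible components of codimension $1$, each $S_i$ is the zero set of an irreducible polynomial $H_i$ that is a relative invariant of $\widetilde Q$. The Sato--Kimura theorem furnishes, verbatim, the two facts I need: the $H_1,\dots,H_m$ are algebraically independent, and every \emph{polynomial} relative invariant of $\widetilde Q$ equals $c\prod_{i=1}^m H_i^{a_i}$ for some $c\in\bbk^{\!^\times}$ and $a_i\ge 0$. Combined with the previous paragraph this gives $\gS(\q)_{\sf si}=\bbk[H_1,\dots,H_m]$, with the $H_i$ irreducible homogeneous semi-invariants forming an algebraically independent generating set. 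For the last assertion, the fundamental semi-invariant $\bp$ of Definition~\ref{def-fund} is a homogeneous semi-invariant (its zero set is the divisorial part of $\q^*_{\sf sing}$), hence a relative invariant of $\widetilde Q$, so $\bp=c\prod_{i=1}^m H_i^{a_i}$ with $a_i\ge 0$; normalising the leading coefficient yields $\bp=\prod_{i=1}^m H_i^{a_i}$.

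The genuine content lies entirely in the passage to $\widetilde Q$ and in the identification of $Q$-semi-invariants with $\widetilde Q$-relative invariants; once this is secured, algebraic independence and the monomial form are handed over directly by \cite{SK}. The main point requiring care is therefore this translation — in particular that the grading exactly accounts for the extra $\bbk^{\!^\times}$-factor — together with confirming that the cited theorem is available over $\bbk=\overline{\bbk}$ in characteristic zero rather than only over $\C$; the rest is formal.
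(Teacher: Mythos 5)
Your proposal is correct and follows essentially the same route as the paper: pass to $\widetilde Q=Q\times\bbk^{\!^\times}$, which acts with a dense orbit by Proposition~\ref{equiv1}, identify homogeneous $Q$-semi-invariants with relative invariants of $\widetilde Q$ via the grading, and invoke the Sato--Kimura structure theorem for prehomogeneous vector spaces. The only cosmetic difference is in the last claim, where the paper shows each prime factor of $\bp$ cuts out a divisor in the complement of the open orbit (using $\widetilde Q\alpha\subset\q^*_{\sf reg}$), while you deduce the monomial form directly from $\bp$ being a semi-invariant; both are valid.
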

\begin{proof}
Set $\widetilde{Q}=Q\times\bbk^{\!^\times}$. By Proposition~\ref{equiv1}, this group acts on $\gt q^*$ with an open orbit. 
Then by~\cite{SK}, the algebra ${\mathcal A}\subset\bbk[\gt q^*]$
generated by  $\widetilde{Q}$-semi-invariants  is a polynomial ring. More precisely,
let $\widetilde{Q}\alpha\subset \gt q^*$ be the open $\widetilde{Q}$-orbit.
It appeared in  the proof of Theorem~\ref{equiv2} as the subset $Y$, see~\eqref{Y}. 
Let $D_1,\dots, D_m$ be all simple divisors in $\gt q^*\setminus\widetilde{Q}\alpha$\, (the irreducible components of codimension $\ge 2$ in $\gt q^*$ are not needed). 
If $D_i=\{H_i=0\}$ and $H_i\in\bbk[\gt q^*]$ is irreducible, then
$H_i$ is a semi-invariant of $\widetilde{Q}$ and $H_1,\dots,H_m$ freely generate ${\mathcal A}$, see \cite[\S\,4]{SK}. 

Clearly each semi-invariant of $\widetilde{Q}$ is also a semi-invariant of $Q$, hence ${\mathcal A}\subset \gS(\gt q)_{\sf si}$. 
 Suppose that $Q F\subset \bbk^{\!^\times}\! F$ for some $F\in \gS(\gt q)$. 
Then each homogeneous component of $F$ is also a semi-invariant of $Q$, and hence of $\widetilde{Q}$. 
This leads to the equality  $\gS(\gt q)_{\sf si}={\mathcal A}=\bbk[H_1,\dots,H_m]$.

Note that $\widetilde{Q}\alpha\subset \gt q^*_{\sf reg}$. Let $p_i$ be a prime factor of $\bp$. Then 
the zero set of $p_i$ is a simple divisor in the complement of $\widetilde{Q}\ap$. Hence $p_i$ is equal to some $H_j$ up to a non-zero 
scalar. 
\end{proof}

\begin{rmk}
If $\alpha\in\gt q^*$ is a contact linear  form, then $\widetilde{Q}\alpha$ is a dense open subset of $\gt q^*$. This implies that 
a contact form is unique up to conjugation by elements of $Q$ and scalar multiplication. Over a non-closed field,  the situation is different. For instance, take $Q=\SL_2(\R)\ltimes\exp(2\R^2)$. Then  
$\q$ is contact. The group $\widetilde{Q}=Q\times\R^{\!^\times}$ has two different orbits  on $\q^*$ that consists of contact
points. Studying contact orbits over $\R$ 
 is an excellent direction for further research. 
\end{rmk}

Unlike a Frobenius Lie algebra, a contact one may have the codim--2 property.  
For instance, $\gt g=\gt{sl}_2$ is contact. The only singular point in $\gt g^*\cong\gt g$ is zero. 
A non-zero semisimple element defines a contact form, while the nilpotent orbit is conical, but  has dimension $2$ as well.  

\begin{rmk} \label{f}
A contact Lie algebra $\gt q$ has another very important semi-invariant, which is never a constant. It is described in matrix terms in \cite[Sect.\,4]{sal}. 
Set $v=\sum_{i=1}^{\dim\q} x_i \partial_i\in\cW^1$. 
Then $v(\gamma)=\gamma$ for any $\gamma\in\q^*$.  Recall that $\pi(\gamma)=\textsl{d}_{\q}\gamma$. 
Set $n=\frac{1}{2}(\dim\q-1)$. 
The exterior product  $(\bwedge^n \pi) \wedge v$ is non-zero at $\ap\in\q^*$ if and only if $\ap$ is contact.  
We have also   
$$
(\bwedge^n \pi) \wedge v = \fb \partial_1\wedge\ldots\wedge \partial_{2n+1},
$$
where $\fb$ is a homogeneous polynomial of degree $n+1$.  
The zero set of $\fb$ is exactly the complement $\gt q^*\setminus\widetilde{Q}\alpha$, where $\ap$ is a contact form. 
Then $\fb=c\prod_{i=1}^m H_i^{a_i}\in\gS(\q)_{\sf si}$ with $a_i\ge 1$ and $c\in\bbk^{\!^\times}\!$. 
Since $\bwedge^{n}\pi=\bp {\eus R}$ with ${\eus R}\in \cW^{2n}$, we have 
$(\bwedge^n \pi) \wedge v=\bp  {\eus R} \wedge v$ and 
$\bp|\fb$. 
\end{rmk}

\begin{ex} \label{ex-f}
(1)\, If $\gt g=\gt{sl}_2$, then $\fb$ is the determinant (up to a non-zero scalar). 

\noindent
(2)\, For a Heisenberg Lie algebra $\gt q$ of dimension $2n{+}1$ with the centre $\bbk z$, 
we have
$\bp=z^n$ and $\fb=z^{n+1}$.  
\end{ex}

\begin{ex} \label{2}
Consider Lie algebras of Example~\ref{ex-ohne}. 
For $\gt b=\Lie B$, we choose a basis consisting of 
\[
h=\diag(1,-2,1), \ h_1=\diag(1,0,-1), \ x=E_{12}, \ y=E_{23}, \ z=[x,y]=E_{13}.
\]
Then $\gS(\gt b)_{\sf si}=\bbk[z,H_2]$ with $H_2=hz+3xy$. If $\gamma(z)\ne 0$ for $\gamma\in\gt b^*$, then 
$B\gamma$ contains a point $\tilde\gamma$ such that $\tilde\gamma(z)=\gamma(z)$ and 
$\tilde\gamma(x)=\tilde\gamma(y)=0$. Clearly $\gt b_{\tilde\gamma}=\bbk h$ and $\gamma\in\gt b^*_{\sf reg}$. 
Therefore $\gt b^*_{\sf sing}$ is contained in $D_1=\{z=0\}$.  However, 
$\gt b_\beta=\bbk z$ for any $\beta\in D_1$ such that $\beta(x)\ne 0$ and $\beta(y)\ne 0$. 
Thus, $\bp=1$. Since $\deg\fb=3$, we have $\fb \in \bbk z H_2$, actually $\fb=2 z H_2$. 

For $\gt p\subset\gt{sp}_4$ from part {\sf (b)}, we have  
$\gS(\gt p)_{\sf si}=\bbk[z,H_2]$, where $z$ is again a non-zero element in the centre of the nilpotent 
radical and $\deg H_2=3$. Here $\fb=z H_2$ up to a scalar  and   $\deg \bp\le 3$, since $\dim\gt b=7$. 
The semi-direct product $\gt{gl}_2\ltimes\bbk^2$ is a Frobenius Lie algebra. This implies 
$\gt p_\beta=\bbk z$ for a generic point of 
$D_1=\{z=0\}$. In a generic $Q\gamma\subset D_2$, there is a point $\tilde\gamma$ such that 
$\tilde\gamma(z)=1$, $\tilde\gamma(\bbk^2)=0$, and $\bar\gamma=\tilde\gamma|_{\gt{sl}_2}$ is a non-zero nilpotent element.   
Then $\gt p_{\tilde\gamma}=(\gt{sl}_2)_{\bar\gamma}$ is of dimension $1$. Therefore 
$D_1$ and $D_2$ are not contained in $\gt p^*_{\sf sing}$ and $\bp=1$.
\end{ex}

\begin{ex}\label{not-free}
There are Lie algebras of index 1 such that $\gS(\gt q)_{\sf si}$ is not a polynomial ring.  
Consider first a semi-direct product $\gt s=\gt{sl}_2\ltimes 4 \bbk^2$, where $4\bbk^2$ is an Abelian ideal. 
Then $\dim\gt s=11$ and  generic 
$\SL_2$-orbits on $4\bbk^2$ are of dimension $3$. 
By~\eqref{semi-ind}, 
$\ind\gt s=5$. The ring $\gS(\gt s)^{\gt s}=\bbk[(4\bbk^2)^*]^{\SL_2}$ is generated by $\binom{4}{2}=6$ invariants of degree 2.  
There is one relation among them. We extend $\gt s$ to $\widetilde{\gt s}$ by adding 
a Cartan subalgebra $\gt t\subset\gt{gl}_4$, where $\gt{gl}_4$ is the centraliser  of $\gt{sl}_2$ in $\gt{gl}(4\bbk^2)$. 
A generic stabiliser for the obtained action of $(\bbk^{\!^\times})^4\times \SL_2$ on $4\bbk^2$ is still trivial.
Hence $\ind\widetilde{\gt s}=8-7=1$.  Here $\gS(\widetilde{\gt s})_{\sf si}=\gS(\gt s)^{\gt s}$ is not a polynomial ring. 
There is a similar example for each $n\ge 1$, namely, 
any semi-direct product $\gt q=(\gt{sl}_{2n+2}\oplus 4\bbk)\ltimes 4\bbk^{2n+2}$.
Here $\ind\gt q=1$ and $\gS(\gt q)_{\sf si}=\gS(\gt s_n)^{\gt s_n}$ for 
$\gt s_n=\gt{sl}_{2n+2}\ltimes 4\bbk^{2n+2}$. 
By 
 \cite[Thm.\,4.3]{pisa}, $\gS(\gt s_n)^{\gt s_n}$ is not a polynomial ring. 
\end{ex} 

There is no equivalence in Proposition~\ref{free}, i.e., $\gS(\q)_{\sf si}$ may be a polynomial ring for a non-contact Lie algebra of index $1$. Let $\gt q=\bbk\ltimes V$ be the Lie algebra from Example~\ref{ex-k}, where 
$\bbk$ acts on $V$ with two non-zero characters
$\lambda$ and $\mu$. Then $\gS(\gt q)_{\sf si}=\gS(V)$ is always a polynomial ring. However, $\gt q$ is contact if and only if 
$\lambda\ne \mu$.

\subsection{The set of semi-invariants}  \label{sec-set}
Suppose  that $\q$ is contact. In this section, we give a criterion for the existence of non-trivial symmetric invariants of 
$\gt q$, describe a generator of $\bbk(\gt q^*)^{\gt q}$, and say a few words about the set, not the ring, of $\gt q$-semi-invariants. 

 Let $H_1,\ldots,H_m$ be the generators of the ring $\gS(\q)_{\sf si}$ described in 
the proof of Proposition~\ref{free}. 
Let $\chi_i: \widetilde{Q}\to \bbk^{\!^\times}$ be the $\widetilde{Q}$-character corresponding to $H_i$. 
We have $g{\cdot}H_i=\chi_i(g)H_i$ for all $g\in \widetilde{Q}$. Then the differentials  $\textsl{d}\chi_i\!:(\q\oplus\bbk)\to \bbk$
with $1\le i\le m$ are linearly independent,
because $\bbk(\q^*)^{\widetilde{Q}}=\bbk$, 
see also  \cite[\S\,4]{SK}. 

Let $\bar\chi_i$ be the restriction of $\textsl{d}\chi_i$ to $\q$. Then 
$\dim\langle\bar\chi_1,\ldots,\bar\chi_m\rangle_{\bbk}=m-1$. 
Since the character group ${\eus X}(\widetilde{Q})$ is a lattice, any relation between 
$\bar\chi_i$ has integer coefficients up to scalar multiples.  Up to a suitable enumeration, the minimal relation looks as 
\begin{equation} \label{rel}
\sum_{i=1}^a c_i\bar\chi_i = \sum_{i=a+1}^b c_i \bar\chi_i,
\end{equation}
where $c_i\ge 1$ for each $i$ and $\gcd(c_1,\ldots,c_b)=1$. Assume that $a\ge (b-a)$. If $b=a$, then 
$F=\prod_{i=1}^a H_i^{c_i}\in\bbk[\q^*]^Q$ and $\bbk(\q^*)^Q=\bbk(F)=\Quot\!(\bbk[\q^*]^Q)$. 
If $b>a$, then $\bbk[\q^*]^Q=\bbk$ and 
$f=(\prod_{i=1}^a H_i^{c_i})( \prod_{i=a+1}^b H_i^{-c_i})$ is a non-regular generator of $\bbk(\q^*)^Q$.  

\begin{rmk} \label{descr-si}
Let $H\in\gS(\q)_{\sf si}$ be homogeneous. Then the zero set $D$ of $H$ is a $\widetilde{Q}$-stable proper closed subset of $\q^*$.
Hence $D\cap\widetilde{Q}\alpha=\varnothing$ for the open orbit $\widetilde{Q}\alpha\subset\q^*$ and $H=\prod_{i=1}^m H_i^{a_i}$ with $a_i\ge 0$. 
 There are also non-homogeneous semi-invariants. For instance, $z+H_2$ 
is a  semi-invariant of
 the Lie algebra from Example~\ref{ex-ohne}{\sf (a)}, see Example~\ref{2}. 
\end{rmk}

\subsection{The canonical truncation} \label{sec-tr}
Let $\q_{\rm tr}\subset\q$ be the intersection of all kernels $\ker\bar\chi_i$ of the characters $\bar\chi_i$ defined in Section~\ref{sec-set}.
Then $\dim\q_{\rm tr}=\dim\q-m+1$. This subalgebra is called {\it the canonical truncation of $\q$}. 
Its crucial feature is that $\gS(\q)_{\sf si}\subset\gS(\q_{\rm tr})$, see~\cite[Kap.\,II,\,\S\,6]{bgr}. 
Actually, $\gS(\q)_{\sf si}\subset\gS(\q_{\rm tr})^{\q_{\rm tr}}$. 
By the construction, $\q_{\rm tr}\lhd\q$ and the quotient $\gt q/\q_{\rm tr}$ is Abelian. 
Furthermore, $\gt q=\q_{\rm tr}\oplus \widetilde{\gt t}$, where  $\widetilde{\gt t}$ is an Abelian subalgebra 
consisting of $\ad$-semisimple elements, but not necessarily  an ideal.  
Therefore $\q$ acts diagonalisably on $\gS(\q_{\rm tr})_{\sf si}$. Hence 
\[
\gS(\q_{\rm tr})_{\sf si} \subset \gS(\q)_{\sf si}  \subset\gS(\q_{\rm tr})^{\q_{\rm tr}}.
\]
Thus, $\gS(\q_{\rm tr})_{\sf si}=\gS(\q_{\rm tr})^{\q_{\rm tr}}= \gS(\q)_{\sf si} =\bbk[H_1,\ldots,H_m]$, 
see Proposition~\ref{free}.
These equalities imply that 
$\bbk(\q_{\rm tr}^*)^{\q_{\rm tr}}=\Quot\!(\bbk[\q_{\rm tr}^*]^{\q_{\rm tr}})$ and 
$m=\trdeg \bbk[\q_{\rm tr}^*]^{\q_{\rm tr}}=\ind\q_{\rm tr}$. 
 A general formula obtained in \cite[Lemma\,3.7]{fonya-et} brings the same result, 
 $\ind \q_{\rm tr}=\ind\gt q+(m-1)=m$. 
 
We have just seen that if $\q$ is contact, then $\q_{\rm tr}$ has the following  remarkable property: 
the
 ring $\gS(\q_{\rm tr})^{\q_{\rm tr}}$  is freely generated by 
$m=\ind\gt q_{\rm tr}$ elements.  
Repeating the argument of \cite[Remark\,3.1]{kot-T} we now obtain further results of this sort.   
Note that $\q_{\rm tr}$ has no proper semi-invariants in $\gS(\q_{\rm tr})$. Then by \cite[Prop.\,5.2]{js}
the differentials $\textsl{d}H_i$ are linearly independent in codimension $2$, i.e., 
\[
\dim \{\xi\in\q_{\rm tr}^*\mid \textsl{d}_\xi H_1\wedge\ldots\wedge\textsl{d}_\xi H_m=0\}\le \dim\q_{\rm tr}-2.
\]
By \cite[Thm.\,2.2]{kot-T}, each finite-dimensional  quotient 
 $\gt w_k:=\gt q_{\rm tr}[t]/(t^k)$  of the current algebra $\gt q_{\rm tr}[t]$ has the same property:  
the ring of symmetric invariants of $\gt w_k$ is freely generated by 
$k{\cdot}\ind\q_{\rm tr}=\ind\gt w_k$ elements.

\section{Affine seaweed subalgebras} \label{sec-aff}

Let $\g$ be a simple finite-dimensional non-Abelian Lie algebra with $r=\rk\gt g$.
We fix a Borel subalgebra $\gt b\subset \gt g$ and a Cartan subalgebra $\gt t\subset\gt b$. 
Let $\Pi=\{\alpha_1,\ldots,\alpha_r\}$ be the set of simple roots associated with 
$(\gt b,\gt t)$. For any root $\alpha$ of $(\gt g,\gt t)$, let $e_\alpha\in\gt g$ be a root vector. 
To a subset $S\subset \Pi$ one associates a standard parabolic subalgebra
$\gt p(S)$, which is generated by $\gt b$ and $\{e_{-\ap}\mid \ap\in S\}$.  Then let 
$\gt p(S)^-$ be the opposite parabolic, which is generated by $\gt t$ together with  $\{e_\ap\mid \ap\in S\}$ 
and $\{e_{-\ap} \mid \ap\in\Pi\}$. A seaweed in $\gt g$ is called {\it standard} if it is of the form 
$\gt p(S)\cap \gt p(T)^-$ for two subsets $S,T\subset \Pi$.  By \cite{Dima}, 
any seaweed in $\gt g$ is conjugate to a standard one. 

We will consider standard seaweeds of the loop 
algebra $\gt g[t,t^{-1}]=\gt g\otimes\bbk[t,t^{-1}]$.  
Let $\tilde \Pi=\{\ap_0\}\sqcup \Pi$ be  the 
affine root system associated with $\Pi$ 
and 
$\delta$ the highest root of $\g$.     
Set $\widehat{\gt b}=\gt b\oplus t\g[t]$ and $e_{-\alpha_0}=e_{\delta} t^{-1}\in\g[t^{-1}]$.  
To a subset $S\subset\tilde\Pi$, one associates a standard parabolic $\gt p=\gt p(S)\subset\gt g[t,t^{-1}]$,
which is generated by $\wb$ and $\{e_{-\beta}\mid \beta\in S\}$.  

Let $\gt p_{\Pi}(S)$  be the standard parabolic  of $\gt g$ associated with $S\cap\Pi$. 
For any Lie algebra $\gt q$, let $\U(\gt q)$ be its enveloping algebra. 
We have 
\[
\gt p=t\g[t]\oplus \gt p_{\Pi}(S) \oplus \gt p_{-1} \oplus [\gt p_{-1},\gt p_{-1}]\oplus [ [\gt p_{-1},\gt p_{-1}],\gt p_{-1}] \oplus \ldots ,
\]
where 
 $\gt p_{-1}=0$ if $\alpha_0\not\in S$ and $\gt p_{-1}=\U(\gt p_{\Pi}(S))e_{-\alpha_0}$ is a cyclic 
$\U(\gt p_{\Pi}(S))$-module if $\ap_0\in S$. 
Let  $\omega$ be an involution of $\gt g[t,t^{-1}]$ such that $\omega|_{\gt t}=-\id_{\gt t}$ and
$\omega(e_{\alpha} t^k)=e_{-\alpha} t^{-k}$ for all simple roots $\alpha\in\Pi$ and all $k\in\mathbb Z$. 
Then the opposite parabolic $\gt p^-$ is defined by $\gt p^-=\omega(\gt p)$. 

If $S=\tilde\Pi$, then $\gt p(S)=\g[t,t^{-1}]$. 
If $\ap_0\not\in S$, then $\gt p(S)=t\gt g[t]\oplus\gt p_{\Pi}(S)$. 
Suppose that $\alpha_0\in S$ and
$|S|\le r$. Let $\gt n$ be the nilpotent radical of $\gt p_{\Pi}(S)$.
Then $\gt p_{-1}=\gt z(\gt n) t^{-1}$, where $\gt z(\gt n)$  is the centre of $\gt n$.  
Hence $[\gt p_{-1},\gt p_{-1}]=0$ and $\gt p(S)=t\gt g[t]\oplus\gt p_{\Pi}(S)\oplus\gt p_{-1}$. 

Having two standard parabolics $\gt p$ and $\gt r$, we define an {\it affine seaweed subalgebra}  
$\gt q=\gt p\cap\gt r^-$.  
An interesting situation occurs if $\gt q$ is finite-dimensional. 
This is the case if and only if both $\gt p,\gt r$ are smaller than $\gt g[t,t^{-1}]$, cf.~\cite[Sect.\,1.4]{jos2}. 

In a slightly more general setting of affine Kac--Moody algebras, a combinatorial formula for the index of 
a finite-dimensional affine seaweed subalgebra is obtained in \cite{jos2}. We are interested in very particular examples 
of contact seaweeds and for them the general formula is not needed.  

Suppose that $\g$ is of type {\sf A}$_r$. Then the extended Dynkin diagram of $\gt{sl}_{r+1}$  
 is a cycle. We consider $\gt q=\gt p\cap \gt r^-$, where 
$\gt p=\gt p(S)$, $\gt r=\gt p(T)$, and $S,T\subset\tilde\Pi$ are proper subsets. 
Therefore   it is safe to 
assume that $\alpha_0\not\in T$. If also  $\alpha_0\not\in S$, then $\gt q$ is a seaweed 
subalgebra of $\g$. Such subalgebras have been discussed already. 

\subsection{Intersections of two maximal parabolics.} 
We will treat one instance, namely, where $\g=\gt{sl}_{r+1}$, $T=\Pi$, and
$|S|=\tilde\Pi\setminus\{\ap_i\}$ with $1\le i\le r$. 
Here 
$\gt q(S,T)$ is a semi-direct product 
\[
\gt q(S,T)\cong \gt{s}(\gt{gl}_a\oplus\gt{gl}_b)\ltimes 2\bbk^a{\otimes}\bbk^b, 
\]
where $a+b=r+1$ and $a=i$, the subspace $2\bbk^a{\otimes}\bbk^b$ is an Abelian ideal,  
the $1$-dimensional centre of the Levi subalgebra $\gt{s}(\gt{gl}_a\oplus\gt{gl}_b)\subset\gt{sl}_{r+1}$ acts on both copies of 
$\bbk^a{\otimes}\bbk^b$ with one and the same non-trivial character. 

First we compute the index of $\q$. Here a special case of \eqref{semi-ind} will be frequently used. For  a semi-direct product 
$H=L\ltimes\exp(V)$ such that $\exp(V)$ is Abelian and $L$ acts on $V^*$ with an open orbit $L\gamma$, it reads
\begin{equation} \label{r}
\ind\Lie H=\ind \Lie L_\gamma\,.
\end{equation}

It is convenient to extend $\q$ by a $1$-dimensional central 
toral subalgebra and to consider another semi-direct product, $R$. 
Set 
\[
Q= Q(a,b)=(\GL_a\times\GL_b)\ltimes \exp(2 \bbk^a{\otimes}\bbk^b)
\] and also 
\[
R=R(a,b)=(\GL_a\times\GL_b)\ltimes \exp((\gt{gl}_a^{\sf ab}\oplus  W)\oplus V),
\]
where 
$[\gt{gl}_a^{\sf ab},\gt{gl}_a^{\sf ab}]=[W,W]=0$,
$W\cong V\cong \bbk^a{\otimes}\bbk^b$, and $[\gt{gl}_a^{\sf ab}, W]=V$. 
In both cases, $\GL_a$ and $\GL_b$ act on $\bbk^a$ and $\bbk^b$ in the natural way, also 
$\GL_a$ acts on $\gt{gl}_a$ via the adjoint representation. 
Note that $Q(a,b)=Q(b,a)$, but  
$R(a,b)\ne R(b,a)$, if $a\ne b$. 
Set $\gt q=\gt q(a,b)=\Lie Q(a,b)$ and 
$\gt r=\gt r(a,b)=\Lie R(a,b)$. Assume that $\q\ne 0$ and $\gt r\ne 0$. 

\begin{thm} \label{t-ind}
We have  $\ind\gt q=\gcd(2a,a+b)$ and $\ind\gt r=\gcd(2a,b)$. 
\end{thm}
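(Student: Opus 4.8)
The plan is to compute both indices by reducing, via the Ra\"is formula \eqref{semi-ind}, to the index of a generic stabiliser, and then to iterate the reduction. For $\ind\gt q$ the relevant action is that of $\GL_a\times\GL_b$ on $V^*$, where $V=2\,\bbk^a\otimes\bbk^b$; a point of $V^*$ is a pair of $a\times b$ matrices, i.e. a matrix pencil. Since $Q(a,b)=Q(b,a)$ one may assume $a\le b$, and for $a<b$ this action has a dense orbit: identifying pairs of matrices with representations of the Kronecker quiver of dimension vector $(b,a)$, a generic pencil $M$ is a balanced direct sum of the Kronecker blocks $L_\epsilon$ and is rigid, so $\dim\mathrm{End}(M)=(a-b)^2=\dim(\GL_a\times\GL_b)-\dim V$, whence the orbit is dense. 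Thus \eqref{r} applies and $\ind\gt q=\ind\gt l_\gamma$, where $\gt l_\gamma=\mathrm{End}(M)$ is the endomorphism algebra of $M$, viewed as a Lie algebra.

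The key step is that $\mathrm{End}(M)$ is again of the same type. Writing $k=b-a$ and $a=qk+s$ with $0\le s<k$, one has $s$ blocks $L_{q+1}$ and $k-s$ blocks $L_q$; since $\dim\mathrm{Hom}(L_q,L_{q+1})=0$ while $\dim\mathrm{Hom}(L_{q+1},L_q)=2$, the radical is abelian of the correct size and $\gt l_\gamma\cong(\gt{gl}_s\oplus\gt{gl}_{k-s})\ltimes 2\,\bbk^s\otimes\bbk^{k-s}=\gt q(s,k-s)$. This gives the self-similar recursion $\gt q(a,b)\rightsquigarrow\gt q(s,k-s)$, and the quantity is preserved because $\gcd(2a,a+b)=\gcd(2a,b-a)=\gcd(2s,b-a)$. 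The recursion is the Euclidean algorithm; it terminates either at $\gt q(0,k)=\gt{gl}_k$ (index $k=\gcd$) or at a diagonal case $\gt q(c,c)$. In the latter the $\GL_c\times\GL_c$-action has no dense orbit, so I use the full formula \eqref{semi-ind}: a generic pencil is a sum of $c$ distinct regular $(1,1)$-modules, $\gt l_\gamma$ is a torus of dimension $c$, and $\ind\gt q(c,c)=c+2c^2-(2c^2-c)=2c=\gcd(2c,2c)$.

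For $\ind\gt r$ I would use the abelian ideal $W\oplus V\subset\gt r$ (note $[\gt{gl}_a^{\sf ab},W]=V$ and $[W\oplus V,W\oplus V]=0$), whose complement $L'=(\GL_a\times\GL_b)\ltimes\exp(\gt{gl}_a^{\sf ab})$ is a genuine subalgebra, and apply \eqref{semi-ind} to $\gt r=L'\ltimes\exp(W\oplus V)$. As a $\GL_b$-module $W\oplus V\cong(\bbk^b)^{\oplus 2a}$, so $\GL_b$ alone acts on $(W\oplus V)^*$ with a dense orbit as soon as $2a\le b$ (a generic point is a $2a$-frame of covectors in $\bbk^b$), and \eqref{r} applies. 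Computing the stabiliser of the normal form $\phi=[I_a\,|\,0\,|\,0]$, $\psi=[0\,|\,I_a\,|\,0]$ (blocks of sizes $a,a,b-2a$), the shear equation forces $Z=-B_{12}$ and the remaining equations pin the Levi to a diagonal copy $\gt{gl}_a\oplus\gt{gl}_{b-2a}$, yielding $\gt l'_\gamma\cong\gt r(a,b-2a)$. This is the recursion $\gt r(a,b)\rightsquigarrow\gt r(a,b-2a)$, preserving $\gcd(2a,b)$ since $b-2a\equiv b\pmod{2a}$, and it reduces $b$ to its residue in $\{0,1,\dots,2a-1\}$.

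It remains to treat the terminal range $0\le b<2a$ directly. Here $\gt r(a,0)=\gt{gl}_a\ltimes\gt{gl}_a^{\sf ab}$, whose index is $2a=\gcd(2a,0)$ (a generic $\xi\in\gt{gl}_a^*$ is regular semisimple with stabiliser a maximal torus, and \eqref{semi-ind} gives $a+a^2-(a^2-a)=2a$), and for $0<b<2a$ I would extract $\ind\gt r(a,b)$ from the rank of the form $\gamma\mapsto(\gamma([x_i,x_j]))$ on a suitable chart, the computation that for instance returns $1$ for $(1,1)$ and $(1,3)\rightsquigarrow(1,1)$, matching $\gcd(2a,b)$. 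I expect the main obstacle to be exactly these generic-stabiliser identifications: the Kronecker-quiver computation showing $\gt l_\gamma\cong\gt q(s,k-s)$, and the frame-plus-shear computation showing $\gt l'_\gamma\cong\gt r(a,b-2a)$, together with the terminal case $b<2a$ of $\gt r$, where no dense orbit is available and the index must be read off from the explicit bracket form rather than from a clean recursion.
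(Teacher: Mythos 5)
Your computation of $\ind\gt q$ is correct and takes a genuinely different route from the paper's. You stay entirely inside the family $\gt q(\cdot,\cdot)$ by identifying the generic stabiliser with the endomorphism algebra of a generic Kronecker pencil: the canonical decomposition into adjacent preprojectives gives $\gt l_\gamma\cong\gt q(s,k-s)$, compressing a whole run of the Euclidean algorithm into the single step $\gt q(a,b)\ard\gt q(s,k-s)$, and you settle the square case $\gt q(c,c)$ by hand via the regular $(1,1)$-modules and the full Ra\"is formula. The paper never isolates the $\gt q$-family: it alternates $\gt q(a,b)\ard\gt q(a,b-2a)$ for $2a\le b$, the cross-family identity $\ind\gt q(a,b)=\ind\gt r(a,b-a)$ for $a\le b$ (from the intermediate decomposition $Q=(H\ltimes\exp(V_1))\ltimes\exp(V_2)$, whose generic stabiliser is $R(a,b-a)$), and $\gt r(a,b)\ard\gt r(a-b,b)$ for $a>b$. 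Your route buys a self-contained and arguably more transparent proof of the first formula; the paper's interleaving buys the second formula at the same time.

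That is exactly where your proposal has a genuine gap: the formula $\ind\gt r(a,b)=\gcd(2a,b)$ is only proved when $2a$ divides $b$. Your reduction $\gt r(a,b)\ard\gt r(a,b-2a)$ (which itself checks out: the dimensions match and the shear identification of the stabiliser with $\gt r(a,b-2a)$ is right) needs $2a\le b$ so that $\GL_b$ has a dense orbit on $2a$ covectors; iterating it strands you in the window $0\le b<2a$, of which you only settle $b=0$. Reading the index off the rank of $\gamma\mapsto(\gamma([x_i,x_j]))$ on a chart is a finite computation for each fixed pair but is not an argument that the answer is $\gcd(2a,b)$ for the infinitely many pairs with $0<b<2a$ --- and these include cases the paper uses later, e.g.\ $\bar{\gt r}(3,2)$ and $\bar{\gt r}(1,2)$ in Section~4. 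Two ways to close the window, both available with the tools you already set up: for $0<b\le a$, a generic $\eta\in V^*$ has full rank, so $\ad^*(\gt{gl}_a^{\sf ab}){\cdot}\eta=W^*$ and $\tilde L=H\ltimes\exp(\gt{gl}_a^{\sf ab})$ still acts on $(W\oplus V)^*$ with a dense orbit even though $\GL_b$ alone does not, with stabiliser $\gt r(a-b,b)$ --- this is the paper's reduction and it preserves $\gcd(2a,b)$; for $a\le b<2a$, use the identity $\ind\gt r(a,c)=\ind\gt q(a,a+c)$ coming from $L_\gamma=R(a,b-a)$ and quote your already-proved $\gt q$-formula. Without one of these, half of the theorem remains unproven.
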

\begin{proof}
We argue 
by induction on $a+b$ dealing with both statements simultaneously.  
If $a=0$, then $Q(0,b)=R(0,b)=\GL_b$ and $\ind\gt q=b=\gcd(0,b)$. 

If $b=0$, then $R(a,0)=\GL_a\ltimes\exp(\gt{gl}_a^{\sf ab})$ is a {\it Takiff}\/ Lie algebra (truncated current algebra) and $\ind\gt r=2a=\gcd(2a,0)$ \cite{takiff}. In particular, both statements hold, if $a+b = 1$. 
Now suppose that $ab>0$. 

Set $H=\GL_a\times\GL_b$.
Consider first $Q$. Assume that $a\le b$.
 If $2a\le b$, then $H$ acts on 
$(2 \bbk^a{\otimes}\bbk^b)^*$ with an open orbit, say $H\xi$. Furthermore, 
$$
H_\xi=(\GL_a\times\GL_{b-2a})\ltimes \exp(2\bbk^a{\otimes}\bbk^{b-2a})=Q(a,b-2a).
$$ 
By~\eqref{r} and induction, 
$\ind\gt q=\ind \Lie H_{\xi}=\gcd(2a,b-a)=\gcd(2a,a+b)$. 

Keep the assumption $a\le b$. We can decompose $Q$ as 
$Q=(H\ltimes \exp(V_1))\ltimes \exp(V_2)$ with $V_1\cong V_2\cong \bbk^a{\otimes}\bbk^b$. 
Then $L=H\ltimes \exp(V_1)$ acts on $V_2^*$ with an open orbit, say $L\gamma$. 
Therefore 
$\ind\gt q=\ind \Lie L_\gamma$ by \eqref{r}.  We have $L_\gamma=H_\gamma\ltimes\exp(V_1)$. By a direct calculation, 
$$
L_\gamma=\GL_a\times\GL_{b-a}\ltimes\exp((\gt{gl}_a^{\sf ab}\oplus \tilde W)\oplus \tilde V)
$$
with $[\gt{gl}_a^{\sf ab},\gt{gl}_a^{\sf ab}]=[\tilde W,\tilde W]=0$,  $[\gt{gl}_a^{\sf ab},\tilde W]=\tilde V$, and $\tilde W\cong\tilde V\cong\bbk^a{\otimes}\bbk^{b-a}$, i.e., 
$L_\gamma=R(a,b-a)$. In case $2a\le b$, we can conclude that
$$
\ind\gt r(a,b-a)=\ind\gt q(a,b)=\ind\gt q(a,b-2a)=\gcd(2a,b-a). 
$$  
Therefore it remains to perform the induction step for $\gt r(a,b)$ in the case, where  $2a>a+b$, i.e., for $a>b$. 

Consider now $\gt r=\gt r(a,b)$ with $a>b$. Let $\eta\in V^*$ be a generic point. We regard it is an element of 
$\gt r^*$. There is a more suitable decomposition of $\gt r$, namely,  $\gt r=\tilde{\gt l}\ltimes (W\oplus V)$, where 
$\tilde{\gt l}=\Lie \tilde L$ with $\tilde L=H\ltimes\exp(\gt{gl}_a^{\sf ab})$. Note that $[W\oplus V,W\oplus V]=0$. 
The inequality $a>b$ implies 
that $\ad^*(\gt{gl}_a^{\sf ab}){\cdot}\eta=W^*$ and that $\tilde L\eta$ is open in $(W\oplus V)^*$. 
Therefore $\ind\gt r=\ind\tilde{\gt l}_\eta$  by \eqref{r}. By a straightforward computation, 
$\tilde{\gt l}_\eta=\gt r(a-b,b)$.  
Since $b>0$, we have $(a-b)+b<a+b$ and
 $\ind\tilde{\gt l}_\eta=\gcd(2(a-b),b)=\gcd(2a,b)$ by the inductive  hypothesis.  
\end{proof}

If $\gcd(2a,a+b)=2$, then removing the centre  of $\gt q(a,b)$ we obtain an affine seaweed of index $1$. Now a natural question is whether it 
is contact or not. 

\subsection{Affine seaweeds of index $1$}  \label{aff1}
Recall that  $\gt q(a,b)$ is a central extension of a standard affine seaweed 
$\gt q(S,T)$. Set $\bar\q=\bar\q(a,b)=\q(S,T)$. By Theorem~\ref{t-ind},
$$
\ind\bar\q=\ind\gt q(a,b)-1=\gcd(2a,a+b)-1. 
$$ 
 If $\ind\bar\q=1$, then either $a\in 2\BZ$ and $a+b\in 2+4\BZ$ or 
both $a$ and $b$ are odd. 
Let $\bar R(a,b)$ be the  quotient of $R(a,b)$ by the central toral subgroup.  

The centre of $\bar\q(a,b)$ is trivial. Thereby $\bar\q(a,b)$ is strongly quasi-reductive if 
and only if it is quasi-reductive. If $b\ne 0$, then the  centre of $\bar{\gt r}(a,b)$ is trivial. 
In case of $\bar{R}(a,0)=\SL_a\ltimes\exp(\gt{gl}_a^{\sf ab})$ with $a\ne 0$, the centre is $1$-dimensional and consists of 
unipotent elements. The corresponding Lie algebra $\bar{\gt r}(a,b):=\Lie \bar R(a,b)$  
is not quasi-reductive for $a>1$ and is quasi-reductive, but not 
strongly quasi-reductive, if $a=1$. 
 
\begin{thm} \label{sw1}
If $a$ and $b$ are even and $\ind\bar\q(a,b)=\ind\bar{\gt r}(a,b)=1$, 
then $\bar\q(a,b)$ and $\bar{\gt r}(a,b)$ are strongly quasi-reductive and hence contact. 
\end{thm}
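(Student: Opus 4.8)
The statement splits into a representation-theoretic core and a bookkeeping problem. Since $\ind\bar\q(a,b)=\ind\bar{\gt r}(a,b)=1$, the implication chain \eqref{qred} shows that \emph{contact} will follow automatically once strong quasi-reductivity is established, so the plan is to prove only that $\bar\q(a,b)$ and $\bar{\gt r}(a,b)$ are strongly quasi-reductive. My first move is to discard the bar. Each of $\q(a,b)$ and $\gt r(a,b)$ is the central extension of its barred version by a toral line $\bbk z_0$, i.e. $\q(a,b)=\bar\q(a,b)\oplus\bbk z_0$ and $\gt r(a,b)=\bar{\gt r}(a,b)\oplus\bbk z_0$ with $z_0$ central and $\ad$-semisimple (here $b>0$, which is forced by the hypothesis $\gcd(2a,b)=2$ together with $a$ even). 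For such a product the stabiliser of a generic covector is $H_\beta\times\bbk^{\!^\times}$, which is reductive exactly when $H_\beta$ is; hence strong quasi-reductivity of a barred algebra is equivalent to that of its unbarred partner. It therefore suffices to prove that $\q(a,b)$ and $\gt r(a,b)$ are strongly quasi-reductive whenever $a,b$ are even and $\gcd(2a,a+b)=2$, resp. $\gcd(2a,b)=2$.

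Second, I would rerun the induction on $a+b$ from the proof of Theorem~\ref{t-ind}, but transport strong quasi-reductivity in place of the index. Every reduction used there is of the shape $L\ltimes\exp(V)$ with $L$ acting on $V^*$ with a dense orbit, so Proposition~\ref{impl2} upgrades each one into an \emph{equivalence} of strong quasi-reductivity:
\[
\q(a,b)\leftrightarrow\q(a,b-2a)\ (2a\le b),\qquad \q(a,b)\leftrightarrow\gt r(a,b-a)\ (a\le b),\qquad \gt r(a,b)\leftrightarrow\gt r(a-b,b)\ (a>b).
\]
The symmetry $\q(a,b)=\q(b,a)$ lets me always take $a\le b$ for $\q$, and composing the first two equivalences yields, for $0<a\le c$, the derived equivalence $\gt r(a,c)\leftrightarrow\q(a,c-a)$. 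In each case the target has strictly smaller $a+b$ (whenever the smaller parameter is positive), both parameters stay even, and the relevant invariant $\gcd(2a,a+b)$, resp. $\gcd(2a,b)$, keeps the value $2$; so the induction descends correctly.

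The base cases reached when the smaller parameter vanishes are $\q(0,b)=\gt r(0,c)=\gt{gl}$, which are reductive and hence strongly quasi-reductive. The one dangerous leaf is the Takiff algebra $\gt r(a,0)$, which fails to be strongly quasi-reductive, and the crux of the whole argument is to show it is never reached. This is exactly where the parity and index bookkeeping pays off: an instance $\gt r(a,0)$ carries the invariant $\gcd(2a,0)=2a$, and since the descent preserves the value $2$ of the invariant while keeping $a$ even, one would need $2a=2$ with $a$ even --- impossible. Thus no $\gt r(a,0)$ with $a>0$ ever occurs, and every branch of the recursion terminates at a reductive $\gt{gl}$.

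Assembling these, induction on $a+b$ gives that $\q(a,b)$ and $\gt r(a,b)$ are strongly quasi-reductive under the stated hypotheses; the first paragraph then returns strong quasi-reductivity of $\bar\q(a,b)$ and $\bar{\gt r}(a,b)$, and since both have index $1$ they are contact by \eqref{qred}. I expect the only real difficulty to be the last paragraph: the single non-strongly-quasi-reductive leaf $\gt r(a,0)$ must be proved unreachable, and the rest is a faithful transport of the index computation of Theorem~\ref{t-ind} through Proposition~\ref{impl2}.
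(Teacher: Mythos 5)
Your proof is correct and follows essentially the same route as the paper: induction on $a+b$ through the semi-direct product reductions $\bar\q(a,b)\ard\bar{\gt r}(a,b-a)$, $\bar{\gt r}(a,b)\ard\bar\q(a,b-a)$ or $\bar{\gt r}(a-b,b)$ already set up in the proof of Theorem~\ref{t-ind}, with Proposition~\ref{impl2} transporting strong quasi-reductivity along each step. Your explicit bookkeeping showing the Takiff leaf $\gt r(a,0)$ is unreachable (via preservation of the gcd invariant and the parity of $a$) is exactly what the paper's base case and index hypotheses encode implicitly, and passing through the unbarred central extensions rather than working with $\bar\q$, $\bar{\gt r}$ directly is only a cosmetic difference.
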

\begin{proof}
We argue 
by induction on $a+b$. The case $a+b=0$ does not take place. 
Suppose that  $a+b=2$. Then either $a$ or $b$ is equal to zero. Note that 
$\bar\q(0,2)=\bar\q(2,0)=\gt{sl}_2$ is reductive. 
Since $\ind\bar{\gt r}(a,b)=1$, the only possibility for $\bar{\gt r}(a,b)$ is $\bar{\gt r}(0,2)=\gt{sl}_2$.
Now we have the induction base and assume that $a+b>2$. 
This assumption implies that $ab>0$.  

{\it Induction step for $\bar Q$.} \ 
The group $\bar Q(a,b)$ is a semi-direct product of $L={\rm S}(\GL_a\times\GL_b)\ltimes\exp(\bbk^a{\otimes}\bbk^b)$ and 
$\exp(V)$ for $V=\bbk^a{\otimes}\bbk^b$. As was mentioned in the proof of Theorem~\ref{t-ind}, $L$ acts on $V^*$ with an open 
orbit $L\gamma$, where $L_\gamma=\bar R(a,b-a)$. 
Since $a>0$, we have $a+(b-a)<a+b$. 

Because $a$ is even, $\bar{\gt r}(a,b-a)$ is strongly quasi-reductive and hence so is 
$\bar\q(a,b)$ by Proposition~\ref{impl2}.

{\it Induction step for $\bar R$.} \ 
The group $\bar R(a,b)$ has two different decompositions into semi-direct products $L\ltimes\exp(V)$. 
Suppose $a\le b$. Then $L={\rm S}(\GL_a\times\GL_b)\ltimes\exp(W)$ and 
$V=\gt{gl}_a^{\sf ab}\oplus V_0$, where $V_0\cong W\cong\bbk^a{\otimes}\bbk^b$ and 
 $[W,\gt{gl}_a^{\sf ab}]=V_0$. 
If 
$a>b$, then 
$$
L={\rm S}(\GL_a\times\GL_b)\ltimes\exp(\gt{gl}_a^{\sf ab})
$$ 
and $V=W\oplus V_0$. 
In both cases, $L$ acts on $V^*$ with an open orbit and a generic point $\gamma\in V^*$ can be chosen 
in $V_0^*$. 
\begin{gather*}
  \text{ If $a\le b$, then } \ L_\gamma=\bar Q(a,b-a);\\
  \text{ if $a > b$, then } \ L_\gamma=\bar R(a-b,b). 
\end{gather*}
Since $a$ is even,  we conclude that $\gt r(a,b)$ is strongly quasi-reductive using Proposition~\ref{impl2} and the inductive hypothesis. 

Finally recall that each strongly quasi-reductive Lie algebra is quasi-reductive and each quasi-reductive Lie algebra of index $1$ is contact by~\eqref{qred}. 
\end{proof}

Keep the assumption $\ind\bar\q(a,b)=\ind\bar{\gt r}(a,b)=1$. 
If $a$ is odd, then neither $\bar\q(a,b)$ nor $\bar{\gt r}(a,b)$ is strongly quasi-reductive, since the above reductions
end with  $\bar{\gt r}(1,0)$. At each reduction step $\bar\q(a,b)\ard \bar{\gt r}(a,b-a)$,  
\,$\bar{\gt r}(a,b)\ard \bar\q(a,b-a)$ or $\bar{\gt r}(a,b)\ard \bar{\gt r}(a-b,b)$, the normaliser $\gt l_{<\gamma>}$ is Frobenius.
This makes inductive arguments difficult.  

\begin{ex}\label{notc}
The Lie algebra $\bar\q(1,1)=\bbk\ltimes 2\bbk$ is not contact. 
We can decompose any $\bar Q(1,b)$ with an odd $b$ as $L\ltimes\exp(V)$, where 
$L=\GL_b$ and $V=2\bbk^b$. Then $L_{<\gamma>}=Q(1,b-2)$ for $\gamma\in V^*$ belonging to the open $L$-orbit. 
The Lie algebra of this normaliser is of index $2$, see Theorem~\ref{t-ind}. Thereby we can use Theorem~\ref{impl} and conclude 
by induction on $b$ that $\bar\q(1,b)$ is never contact.   
\end{ex}

Consider the chain of reductions $\bar\q(1,3)\ard\bar{\gt r}(1,2)\ard\bar\q(1,1)$, where the first and the last algebras are not contact. 
By Theorem~\ref{impl}{\sf (i)},  $\bar{\gt r}(1,2)$ is contact. 

Consider another chain of reductions
\[
\bar\q(3,5)\ard\bar{\gt r}(3,2)\ard\bar{\gt r}(1,2)\ard\bar\q(1,1).
\] 
Since $\bar{\gt r}(1,2)$ is contact, the previous item, $\bar{\gt r}(3,2)$, may be contact or not. In order to decide this, one 
can use Proposition~\ref{impl-ii}. 
If  $\bar{\gt r}(3,2)$ is contact, then 
the situation with $\bar\q(3,5)$ is the same. We suspect that $\bar\q(a,b)$ with odd  $a$ and $b$ is never contact,
but leave the question open for the moment. 


\begin{thebibliography}{15}

\bibitem{amm}
{\sc K.~Ammari}, 
Stabilit{\'e} des sous-alg\`ebres biparaboliques des alg\`ebres de Lie simples, 
{\it J. Lie Theory}, {\bf 32}, no.\,1 (2022), 239--260.

\bibitem{bgr} 
{\sc W.~Borho, P.~Gabriel}, and {\sc R.~Rentschler}, 
``Primideale in einh\"ullenden aufl\"osbarer Lie-Algebren'', 
Lecture Notes in Math., Bd. {\bf 357}, Springer-Verlag, 1973.

\bibitem{MZ}
{\sc J.-Y.~Charbonnel} and {\sc A.~Moreau},
The symmetric invariants of centralizers and Slodowy grading, 
{\it Math. Z.}, {\bf 282}\,(2016), no.~1-2, 273--339.

\bibitem{coll}
{\sc V.\,E.~Coll Jr.}  and {\sc N.~Russoniello}, 
Classification of contact seaweeds,
{\it J. Algebra}, {\bf 659}\,(2024), 811--817. 

\bibitem{derK}
{\sc V.~Dergachev} and {\sc A.A.~Kirillov}, 
Index of Lie algebras of seaweed type, 
{\it J. Lie Theory}, {\bf 10}\,(2000), 331--343.

\bibitem{dia}
{\sc Andr{\'e} Diatta}, 
Left invariant contact structures on Lie groups,
{\it Diff. Geometry and its Applications}, {\bf 26}\,(2008), 544--552. 

\bibitem{duflo}
 {\sc M.~Duflo, M.\,S.~Khalgui},  and {\sc P.~Torasso}, 
 Alg\`ebres de Lie quasi-r{\'e}ductives, {\it Transform. Groups}, {\bf 17}\,(2012),
417--470.

\bibitem{El-fr}
{\sc A.\,G.~Elashvili},
Frobenius Lie algebras, {\it Funct. Anal. Appl.}, {\bf 16}, no.\,4 (1982), 326--328. 

\bibitem{gei}
{\sc H.~Geiges}, 
An Introduction to Contact Topology, 
Cambridge Studies in Advanced Mathematics, {\bf 109}, 
Cambridge University Press 2008. 

\bibitem{gro}
{\sc M.~Gromov},
Stable maps of foliations in manifolds, {\it Izv. Akad. Nauk SSSR}, {\bf 33}\,(1969), 707--734.

\bibitem{jos}
{\sc A.~Joseph},  
On semi-invariants and index for biparabolic (seaweed) algebras, I, 
{\it J. Algebra}, {\bf 305}\,(2006), 487--515.

\bibitem{jos2}
{\sc A.~Joseph},
On semi-invariants and index for biparabolic (seaweed) algebras II, {\it J. Algebra}, 
{\bf 312}\,(2007), 158--193. 

\bibitem{js}
{\sc A.~Joseph} and {\sc D.~Shafrir},
Polynomiality of invariants, unimodularity and adapted pairs, 
{\it Transform. Groups}, {\bf 15}, no.\,4 (2010), 851--882.

\bibitem{MRS}
{\sc A.~Moreau} and {\sc O.~Yakimova}, 
Coadjoint orbits of reductive type of parabolic and seaweed Lie subalgebras, 
{\it Int. Math. Res. Notices}, {\bf 2012}, no.\,19 (2012), 4475--4519.

\bibitem{Ooms}
{\sc A.\,I.~Ooms}, 
On Frobenius Lie algebras, {\it Comm. Algebra}, {\bf 8},  no.\,1 (1980), 13--52.

\bibitem{fonya-et}
{\sc A.~Ooms} and {\sc M.~Van~den~Bergh}, 
A degree inequality for Lie algebras with a regular Poisson semi-center, 
{\it J. Algebra}, {\bf 323}, no.\,2 (2010), 305--322.

\bibitem{Dima}  {\sc D.~Panyushev}, 
Inductive formulas for the index of seaweed Lie algebras, {\it Moscow Math. J.}, {\bf 1}\,(2001), 221--241.

\bibitem{Dima03-b}  {\sc D.~Panyushev}, 
An extension of Ra\"{i}s'  theorem and seaweed subalgebras of simple Lie algebras,
{\it Ann. Inst. Fourier} (Grenoble), {\bf 55}\,(2005), no. 3, 693--715.

\bibitem{p09}
{\sc D.~Panyushev}, 
Periodic automorphisms of Takiff algebras, contractions, and $\theta$-groups,
{\it Transformation Groups}, {\bf 14}, no.\,2 (2009), 463--482.

\bibitem{mC} {\sc D.~Panyushev} and {\sc O.~Yakimova}, 
On seaweed subalgebras and meander graphs in type {\sf C}, 
{\it Pacific J. Math.}, {\bf 285}, no.\,2~(2016), 485--499. 

\bibitem{kot-T}
{\sc D.~Panyushev} and {\sc  O.~Yakimova}, 
Takiff algebras with polynomial rings of symmetric invariants, 
{\it Transformation Groups}, {\bf 25}\,(2020),  609--624. 

\bibitem{r}
{\sc M.~Ra{\"i}s}, 
L'indice des produits semi-directs $E\times_{\rho}\gt g$, \ 
{\it C. R. Acad. Sci. Paris Ser. A}, {\bf 287}\,(1978),
195--197.

\bibitem{sal}
{\sc G.~Salgado-Gonz{\'a}lez}, 
Invariants of contact Lie algebras, 
{\it J. Geometry and Physics}, {\bf 144}\,(2019), 388--396. 

\bibitem{SK} {\sc M.~Sato} and {\sc T.~Kimura}, 
A classification of irreducible prehomogeneous vector spaces and their relative invariants, 
{\it Nagoya Math. J.},  {\bf 65}\,(1977), 1--155.

\bibitem{spr} 
{\sc T.\,A.~Springer},  Aktionen reduktiver Gruppen auf Variet\"aten,
in: ``Algebraische Transformationsgruppen und Invariantentheorie", DMV-Seminar, Bd. {\bf 13}, 
Basel--Boston--Berlin: Birkh\"auser 1989, S.\,3--39.

\bibitem{takiff} 
{\sc S.\,J. Takiff},   
Rings of invariant polynomials for a class of Lie algebras, 
{\it Trans. Amer. Math. Soc.} {\bf 160}\,(1971), 249--262.

\bibitem{TYu} 
{\sc P.~Tauvel} and {\sc R.\,W.\,T.~Yu},
Indice et formes lin{\'e}aires stables dans les alg\`ebres de Lie, 
{\it J. Algebra}, {\bf 273}\,(2004), 507--516.

\bibitem{pisa}
{\sc O.~Yakimova}, 
Some semi-direct products  with free algebras of symmetric invariants, 
F. Callegaro et al. (eds.), {\it Perspectives in Lie Theory},
Springer INdAM Series {\bf 19}\,(2017), 267--279. 


\end{thebibliography}
\end{document}